\newtheorem{thm}{Theorem}
\newtheorem{prop}[thm]{Proposition}
\newtheorem{lem}[thm]{Lemma}
\newtheorem{cor}[thm]{Corollary}
\theoremstyle{remark}
\newtheorem{rem}[thm]{Remark}
\theoremstyle{definition}
\newcommand{\col}{\kern -3pt :}
\newcommand{\C}{\mathbb C}
\newcommand{\R}{\mathbb R}
\newcommand{\Z}{\mathbb Z}
\newcommand{\HH}{\mathbb H}
\newcommand{\Id}{\mathrm{Id}}
\newcommand{\End}{\mathrm{End}}
\newcommand{\SSS}{\mathcal S^A(S)}
\newcommand{\SSSS}{\mathcal S^{-1}(S)}
\newcommand{\TT}{\mathcal T^\omega(\lambda)}
\newcommand{\TTT}{\mathcal T^{\iota}(\lambda)}
\newcommand{\RR}{\mathcal R_{\SL(\C)}(S)}
\newcommand{\RS}{\mathcal R_{\PSL(\C)}^{\mathrm{Spin}}(S)}
\newcommand{\Spin}{\mathrm{Spin}(S)}
\newcommand{\ZZ}{\mathcal Z^\omega(\lambda)}
\newcommand{\ZZZ}{\mathcal Z^\iota(\lambda)}
\newcommand{\SL}{\mathrm{SL}_2}
\newcommand{\PSL}{\mathrm{PSL}_2}
\newcommand{\Tr}{\mathrm{Tr}}
\newcommand{\E}{\mathrm{e}}
\newcommand{\I}{\mathrm{i}}
\newcommand{\QBinom}[3] {\begin{pmatrix}{#1}\\{#2}\end{pmatrix}_{\kern -4pt{#3}}}
\newcommand{\QInt}[2]{\left(#1\right)_{\kern -1 pt{#2}}}
\newcommand{\QQInt}[2]{\left[#1\right]_{\kern 0 pt{#2}}}
\newcommand{\db}{/\kern -4pt/}
\renewcommand{\leq}{\leqslant}
\renewcommand{\phi}{\varphi}
\renewcommand{\epsilon}{\varepsilon}
\title
[Representations of the skein algebra: punctured surfaces]
{Representations of the\\ Kauffman bracket skein algebra~II: \\punctured surfaces}
\author{Francis Bonahon}
\address {Department
of Mathematics,  University of
Southern California, Los Angeles
CA~90089-2532, U.S.A.}
\email{fbonahon@math.usc.edu}
\author{Helen Wong}
\address{Department
of Mathematics, Carleton College, Northfield MN 55057, U.S.A.}
\email{hwong@carleton.edu}
\thanks{This research was partially supported by grants  DMS-1105402, DMS-1105692, DMS-1406559 from the U.S. National Science Foundation. In addition, the article was extensively rewritten and reorganized while the first author was a Simons  Fellow (grant 301050 from the Simons Foundation) in 2014-15, as well as a Simons Visiting Professor at the Mathematical Sciences Research Institute in Berkeley, California, (NSF grant 09032078000) in the Spring 2015 semester.}
\date{\today}
\begin{document}

\begin{abstract}
In earlier work \cite{BonWon3}, we constructed invariants of irreducible finite-dimen\-sional representations of the Kauffman bracket skein algebra of a surface. We introduce here an inverse construction, which to a set of possible invariants associates an irreducible representation that realizes these invariants. The current article is restricted to surfaces with at least one puncture, a condition that is lifted in subsequent work \cite{BonWon6} relying on this one. A step in the proof is of independent interest, and describes the arithmetic structure of the Thurston intersection form on the space of integer weight systems for a train track. 
\end{abstract}

 \maketitle

This article is a continuation of \cite{BonWon3} and is part of the program described in \cite{BonWon2},   devoted to the analysis and construction of  finite-dimensional representations of the Kauffman bracket skein algebra of a surface.

Let $S$ be an oriented surface of finite topological type without boundary. 
The \emph{Kauffman bracket skein algebra} $\SSS$  depends on a parameter $A=\E^{\pi\I \hbar}\in \C-\{0\}$, and is defined as follows: One first considers the vector space freely generated by  all isotopy classes of framed links in the thickened surface $S \times [0,1]$, and then one takes the quotient of this space by two relations. The first and main relation is the  \emph{skein relation}, which states that 
$$
[K_1] = A^{-1} [K_0] + A [K_\infty]
$$
whenever the three links $K_1$, $K_0$ and $K_\infty\subset S\times [0,1]$ differ only in a little ball where they are as represented in Figure~\ref{fig:SkeinRelation}. The second relation is the \emph{trivial knot relation}, which asserts that 
$$[K\cup O] = -(A^2 +A^{-2})[K]$$
 whenever $O$ is the boundary of a disk $D \subset K \times [0,1]$ disjoint from $K$, and is endowed with a framing transverse to $D$. The algebra multiplication is provided by the operation of superposition, for which the product $[K]\cdot [L]$ is represented by the union $[K'\cup L']$ where $K' \subset S\times [0,\frac 12]$ and $L' \subset S\times [\frac12, 1]$ are respectively obtained by rescaling the framed links $K \subset S\times [0,1]$ and $L' \subset S\times [0, 1]$ in the $[0,1]$ direction. 

\begin{figure}[htbp]

\SetLabels
( .5 * -.4 ) $K_0$ \\
( .1 * -.4 )  $K_1$\\
(  .9*  -.4) $K_\infty$ \\
\endSetLabels
\centerline{\AffixLabels{\includegraphics{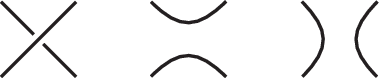}}}
\vskip 15pt
\caption{A Kauffman triple}
\label{fig:SkeinRelation}
\end{figure}

Turaev \cite{Tur}, Bullock-Frohman-Kania-Bartoszy\'nska \cite{BFK1, BFK2} and Przytycki-Sikora \cite{PrzS} showed that the skein algebra $\SSS$ provides a quantization of the \emph{character variety}
$$
\RR = \{ \text{group homomorphisms }r\col  \pi_1(S) \to \SL(\C) \}\db \SL(\C)
$$
where $\SL(\C)$ acts on homomorphisms by conjugation, and where the double bar indicates that the quotient is to be taken in the sense of geometric invariant theory \cite{Mum}. In fact, if one follows the physical tradition that a quantization of a space $X$ replaces the commutative algebra of functions on $X$ by a non-commutative algebra of operators on a Hilbert space, an element of the  quantization should be a \emph{representation} of the skein algebra.

When $A$ is a root of unity, a classical example of a finite-dimensional representation of the skein algebra  $\SSS$  arises from the Witten-Reshetikhin-Turaev topological quantum field theory associated to the fundamental representation of the quantum group $\mathrm U_q (\mathfrak{sl}_2)$ \cite{Witten, ReshTur, BHMV, TuraevBook, BonWon5}. The main purpose of the current article is to provide a wider family of such representations when the surface $S$ has at least one puncture. The case of closed surfaces is considered in the subsequent article \cite{BonWon4}, which  builds on this one.

In \cite{BonWon3}, we identified invariants for irreducible finite-dimensional  representations $\rho \col \SSS \to \End(E)$, in the case where $A^2$ is a primitive $N$--root of unity with $N$ odd. These invariants are a little easier to describe when $A^N=-1$, and most of the current article will be devoted to this case. We indicate in \S \ref{sect:ANplus1} how the other possible case when $A^N=+1$ can be deduced from this one. Because $N$ is odd, the property that $A^2$ is a primitive $N$--root of unity with $A^N=-1$ is equivalent to the property that $A$ is a primitive $N$--root of $-1$. 

When $A^N=-1$, our main invariant is a point of the character variety $\RR$. 
Its definition involves the $n$--th normalized Chebyshev polynomial $T_n(x)$ of the first kind, determined by the trigonometric identity that $2\cos n\theta = T_n(2\cos \theta)$. Equivalently, $\Tr\, M^n = T_n (\Tr\,M)$ for every matrix $M\in \SL(\C)$.

A character $r\in \RR$ associates a trace $\Tr\,r(K)\in \C$ to each closed curve $K$ on the surface $S$. This trace is independent of the homomorphism $\pi_1(S) \to \SL(\C)$  used to represent $r$, or of the representative chosen in the conjugacy class of $\pi_1(S)$ representing $K$. In fact, the character variety $\RR$ is defined in such a way  that two homomorphisms $r \colon \pi_1(S) \to \SL(\C)$ correspond to the same character if and only if they induce the same trace function $K \mapsto \Tr\,r(K)$. 

\begin{thm}[\cite{BonWon3}]
\label{thm:InvariantsExist}
Suppose that $A$ is a primitive $N$--root of $-1$ with $N$ odd, and let $\rho \col \SSS \to \End(E)$ be an irreducible finite-dimensional representation of the Kauffman bracket skein algebra. Let $T_N(x)$ be the $N$-th normalized Chebyshev polynomial  of the first kind. 

\begin{enumerate}
\item  There exists a unique character $r_\rho \in \RR$ such that
$$
T_N \bigl( \rho ([K]) \bigr) =- \bigl( \Tr\, r_\rho(K) \bigr) \Id_E
$$
for every framed knot $K\subset S \times [0,1]$ whose projection to $S$ has no crossing and whose framing is vertical. 

\item Let $P_k$ be a small simple loop going around the $k$--th puncture of $S$, and consider it as a knot in $S\times[0,1]$ with vertical framing. Then there exists  a number $p_k\in \C$ such that
$
\rho\bigl([P_k]\bigr) = p_k \Id_E
$.

\item The number $p_k$ of {\upshape (2)} is related to the character  $r_\rho \in \RR$ of 
 {\upshape (1)} by the property that  $
T_N (p_k) = -\Tr\, r_\rho(P_k)
$.
\end{enumerate}
\end{thm}

The character  $r_\rho \in \RR$ associated to the irreducible representation $\rho \col \SSS \to \End(E)$ by Part~(1) of Theorem~\ref{thm:InvariantsExist} is the \emph{classical shadow} of $\rho$. This Part (1) also admits a generalization to all framed links $K\subset S \times [0,1]$, which involves the element $[K^{T_N}]\in \SSS$ defined by threading the Chebyshev polynomial $T_N$ along all components of $K$; see \cite{BonWon3} for a precise statement. The numbers $p_k$ defined by Part~(2) are the \emph{puncture invariants} of the representation $\rho$. 
Part~(3)   shows that, once the classical shadow $r_\rho$ is known, there are at most $N$ possible values for each of the puncture invariants $p_k$. 

The classical shadow provides one more example of a situation where a quantum object determines one of the classical objects that are being quantized. See also \cite{Le} for a different and more recent approach to  the key results underlying Theorem~\ref{thm:InvariantsExist}.

The main result of this article is the following converse statement. 

\begin{thm}
\label{thm:RealizeInvariantsIntro}
Assume that the surface $S$ has at least one puncture, that its Euler characteristic is negative,   that $A$ is a primitive $N$--root of $-1$ with $N$ odd, and that we are given:
\begin{enumerate}
\item a character  $r\in \RR$ which realizes some ideal triangulation of $S$ in the sense discussed in~{\upshape \S \ref{sect:ShearCoord}};
\item  a number $p_k\in \C$ such that $T_N (p_k) = - \Tr\, r(P_k)$ for each of the punctures of~$S$. 
\end{enumerate} 

Then, there exists an irreducible finite-dimensional representation $\rho \col \SSS \to \End(E)$ whose classical shadow is equal to $r$ and whose puncture invariants are the $p_k$. 
\end{thm}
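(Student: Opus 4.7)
The plan is to produce $\rho$ by first constructing an irreducible representation of the Chekhov--Fock quantum Teichm\"uller space $\TT$ of an ideal triangulation $\lambda$ of $S$ that is realized by $r$ (such a $\lambda$ exists by hypothesis~(1)), and then restricting along the quantum trace embedding $\SSS \hookrightarrow \TT$. Since $A^2$ is a primitive $N$-th root of unity with $N$ odd, the center of $\TT$ is generated by the $N$-th powers of its edge generators, and its irreducible modules are classified up to isomorphism by a central character. The exponentiated shear coordinates of $r$ along $\lambda$ provide natural values for these $N$-th powers; together with the prescribed $p_k$, which correspond to central elements coming from the peripheral curves, they determine a central character $\chi$ on $\ZZ$. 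Standard Heisenberg-type representation theory of quantum tori at a root of unity then yields an irreducible $\TT$-module with central character~$\chi$.

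The core difficulty is that the image of $\SSS$ inside $\TT$ under the quantum trace map is a proper subalgebra, so the restriction of an irreducible $\TT$-representation need not remain irreducible over $\SSS$. Handling this requires an explicit description of this image, in particular of its center. I would analyze the lattice of integer weight systems on the train track dual to $\lambda$, equipped with the Thurston intersection form $\sigma$: the image of $\SSS$ corresponds to a distinguished sublattice, and its center modulo $N$-th powers corresponds to the $\sigma$-radical of this sublattice reduced mod~$N$. Placing $\sigma$ in a normal form that separates peripheral from non-peripheral directions is precisely the algebraic result of independent interest advertised in the abstract; it shows that the image subalgebra has exactly the right size, with a maximal commutative-modulo-center part whose non-commuting complement is large enough to act irreducibly on the $\TT$-module constructed above.

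Once this is in place, the construction concludes by choosing $N$-th roots of the exponentiated shear coordinates and of the values $p_k$ consistently, defining $\rho$ as the restriction of the resulting $\TT$-module, and verifying via Theorem~\ref{thm:InvariantsExist} that its classical shadow is~$r$ and its puncture invariants are the $p_k$. The main obstacle is the normal-form analysis of the Thurston intersection form and the irreducibility argument that rests on it; once those are in hand, the finite freedom in selecting the lifts matches the $N$-fold ambiguity in Part~(3) of Theorem~\ref{thm:InvariantsExist}, producing enough representations to realize every admissible pair $(r,(p_k))$.
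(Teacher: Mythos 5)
Your high-level strategy — build an irreducible module for the (balanced) Chekhov--Fock algebra from the exponentiated shear coordinates of $r$ and the chosen $p_k$, then restrict along the quantum trace map — matches the paper's. But two substantive problems remain.

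First, you have omitted the ingredient the paper identifies as the central technical tool: the commutativity of the square
$\mathbf F^\omega\circ\Tr_\lambda^\iota = \Tr_\lambda^\omega\circ\mathbf T^A$ relating the Chebyshev homomorphism on $\SSS$ to the Frobenius homomorphism on the balanced Chekhov--Fock algebra (Theorem~\ref{thm:ChebyQTracesFrob}, the ``miraculous cancellations'' from \cite{BonWon3}). You propose to ``verify via Theorem~\ref{thm:InvariantsExist}'' that the classical shadow is $r$, but that theorem only asserts existence of the shadow; it gives no mechanism to compute it for the representation $\mu\circ\Tr_\lambda^\omega$. The paper's computation
$$\rho\circ\mathbf T^A([K]) = \mu\circ\Tr_\lambda^\omega\circ\mathbf T^A([K]) = \mu\circ\mathbf F^\omega\circ\Tr_\lambda^\iota([K]) = \rho_\zeta\circ\Tr_\lambda^\iota([K])\,\Id_V = \Tr_r([K])\,\Id_V$$
lives or dies by that square. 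Without it your construction produces a representation whose shadow you have no way to pin down.

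Second, you identify the irreducibility of the restriction of the $\ZZ$-module to $\SSS$ as the ``core difficulty'' and propose to settle it by a normal-form analysis of the Thurston form on the image of $\SSS$. This is a red herring: the statement only asks for the \emph{existence} of an irreducible $\rho$ with the given invariants, and the paper handles the issue by simply passing to an irreducible subrepresentation $W\subset V$, observing that all the relevant relations ($\rho([K])$ acting as a prescribed scalar on $W$) restrict. Indeed the paper explicitly remarks that irreducibility of $\mu\circ\Tr_\lambda^\omega$ itself is only conjectural. Moreover, the Thurston-form normal form (Theorem~\ref{thm:StructureThurston}) is used in the paper for a different purpose: to compute the algebraic structure of $\ZZ$ as a tensor product of quantum tori and a polynomial algebra (Corollary~\ref{cor:StructureSquareRootCheFock}), which is what makes the classification of irreducible $\ZZ$-modules (Proposition~\ref{prop:RepsSquareRootCheFock}) elementary. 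You also appear to conflate the image of $\SSS$ under $\Tr_\lambda^\omega$ with the balanced subalgebra $\ZZ\subset\TT$; the former is a proper (and not explicitly described) subalgebra of the latter, and no sublattice of $\mathcal W(\tau_\lambda;\Z)$ parametrizes it.
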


The requirement that $r$ realizes some ideal representation is fairly mild. It can be shown to be satisfied by all points outside of an algebraic subset of complex codimension $2|\chi(S)| - 1$ in the character variety $\RR$. 

The sequel \cite{BonWon4, BonWon6} to this paper greatly improves Theorem~\ref{thm:RealizeInvariantsIntro}. In particular, it  removes the requirements that $r$ realizes an ideal triangulation, and that $S$ has at least one puncture. It  also shows that the representation provided by our construction is independent of the many choices made during the argument, so that its output is natural, in particular  with respect to the action of the mapping class group $\pi_0 \mathrm{Diff}(S)$ of the surface. The constructions and results of the current article are a key ingredient in the proofs of \cite{BonWon4} and \cite{BonWon6}.

The proof of Theorem~\ref{thm:RealizeInvariantsIntro} uses as a fundamental tool the quantum trace homomorphism $\Tr_\lambda^\omega\col\SSS \to  \TT$, constructed in  \cite{BonWon1}, which embeds the skein algebra in the quantum Teichm\"uller space. The \emph{quantum Teichm\"uller space} is here incarnated as the  Chekhov-Fock  algebra $\TT$ of an ideal triangulation $\lambda$ of the surface, and is a quantization of an object that is closely related to the character variety $\RR$. It is not as natural as the Kauffman bracket skein algebra, but its algebraic structure is very simple. In particular, its representation theory is relatively easy to analyze \cite{BonLiu}. The same holds for a smaller algebra $\ZZ\subset \TT$ containing the image of the quantum trace homomorphism $\Tr_\lambda^\omega$. 
Composing representations of  $\ZZ$ with the homomorphism $\Tr_\lambda^\omega\col\SSS \to \ZZ$ provides an extensive family of representations of the skein algebra $\SSS$, which can then be used to prove Theorem~\ref{thm:RealizeInvariantsIntro}.  

The main technical challenge in this strategy is to compute the classical shadow of the representations of $\SSS$ so obtained, in terms of  the parameters controlling the original representations of $\ZZ$. This is provided by the miraculous cancellations discovered in \cite{BonWon3}. These properties show that the quantum trace homomorphism  $\Tr_\lambda^\omega\col\SSS \to \ZZ$ is well-behaved with respect to the Chebyshev homomorphism $\SSSS \to \SSS$ used to define the classical shadow of a representation of $\SSS$, and with respect to the Frobenius homomorphism $\ZZZ \to \ZZ$ which computes the invariants of representations of $\ZZ$. 

One of the steps in the proof, used to determine the algebraic structure of the algebra $\ZZ$,  may be of interest by itself. This statement describes the  structure of the Thurston intersection form  on the set $\mathcal W(\tau; \Z)$ of integer-valued edge weight systems for a train track $\tau$. The result is well-known for real-valued weights. However, the integer valued case has subtler number-theoretic properties, resulting in the unexpected simultaneous occurrence of blocks 
$\left(
\begin{smallmatrix}
0&1\\-1&0
\end{smallmatrix}
\right)
$
and
$\left(
\begin{smallmatrix}
0&2\\-2&0
\end{smallmatrix}
\right)
$ in the block diagonalization of the Thurston form. See Theorem~\ref{thm:StructureThurston} in the Appendix. Because of the ubiquity of the Thurston intersection form in many geometric problems (for instance the relationship between complex lengths and the shear-bend cocycle $\beta \in \mathcal W(\tau; \C/2 \pi \mathrm i \Z)$ of a pleated surface \cite{Bon97}), this statement is probably of interest beyond the quantum topology scope of the current article.

See the recent preprints \cite{AbdFroh1, AbdFroh2} for another construction of representations of $\SSS$ with a given classical shadow $r\in \RR$, valid for $r$ in a Zariski dense open subset of $\RR$. The construction of \cite{AbdFroh1, AbdFroh2} is simpler, but ours is more explicit. In the few cases where the dimension of the representations of \cite{AbdFroh1, AbdFroh2} can be computed, these dimensions are significantly larger than those arising in the current article. 
 
\section{The Chekhov-Fock algebra and the quantum trace homomorphism}

\subsection{The Chekhov-Fock algebra}
\label{sect:CheFock}

The Chekhov-Fock algebra (introduced in \cite{BonLiu} as a reinterpretation of key insights from  \cite{CheFoc1, CheFoc2, Foc}) is the avatar of the quantum Teich\-m\"uller space associated to an ideal triangulation of the surface $S$. See also \cite{Kash} for a related construction, and \cite{BonLiu, Liu} for more discussion.

If $S$ is obtained from a compact surface $\bar S$ by removing finitely many points $v_1$, $v_2$, \dots, $v_s$, an
 \emph{ideal triangulation} of $S$ is a triangulation $\lambda$ of $\bar S$ whose vertex set is exactly
$\{ v_1,v_2, 
\dots, v_s \}$. The surface $S$ admits an ideal triangulation if and only if it is non-compact and if its Euler characteristic is negative; we will consequently assume these properties satisfied throughout the article. If the surface has genus $g$ and $s$ punctures, an ideal triangulation then has $n=6g+3s-6$  edges and $4g+2s-4$ faces.

Let $e_1$, $e_2$, \dots, $e_n$ denote the edges of $\lambda$. Let $a_i \in \{0,1, 2\}$ be the number of times an end of the edge $e_j$ immediately succeeds an end of $e_i$ when going counterclockwise around a puncture of $S$, and set $\sigma_{ij}=a_{ij}-a_{ji}\in \{-2, -1, 0, 1, 2\}$.  The \emph{Chekhov-Fock algebra} $\TT$ of $\lambda$ is the algebra defined by generators $Z_1^{\pm1}$, $Z_2^{\pm1}$, \dots, $Z_n^{\pm1}$ associated to the edges  $e_1$, $e_2$, \dots, $e_n$ of $\lambda$, and by the relations
$$
Z_iZ_j = \omega^{2\sigma_{ij}} Z_jZ_i.
$$ 

\begin{rem}
The actual Chekhov-Fock algebra $\mathcal T^q(\lambda)$ that is at the basis of the quantum Teichm\"uller space uses the constant $q=\omega^4$ instead of $\omega$. The generators $Z_i$ of $\TT$ appearing here are designed to model square roots of the original generators of $\mathcal T^q(\lambda)$.
\end{rem}

An element of the Chekhov-Fock algebra $\TT$ is a linear combination of monomials $Z_{i_1}^{n_1}Z_{i_2}^{n_2} \dots Z_{i_l}^{n_l}$ in the generators $Z_i$, with $n_1$, $n_2$, \dots, $n_l\in \Z$. Because of the skew-commutativity relation $Z_iZ_j = \omega^{2\sigma_{ij}} Z_jZ_i$, the order of the variables in such a  monomial does matter. It is convenient to use the following symmetrization trick. 
The \emph{Weyl quantum ordering} for  $Z_{i_1}^{n_1}Z_{i_2}^{n_2} \dots Z_{i_l}^{n_l}$ is the monomial
$$
[Z_{i_1}^{n_1}Z_{i_2}^{n_2} \dots Z_{i_l}^{n_l}] = \omega^{-\sum_{u<v} n_un_v\sigma_{i_ui_v}} Z_{i_1}^{n_1}Z_{i_2}^{n_2} \dots Z_{i_l}^{n_l}. 
$$
The formula is specially designed that $[Z_{i_1}^{n_1}Z_{i_2}^{n_2} \dots Z_{i_l}^{n_l}] $ is invariant under any permutation of the $Z_{i_u}^{n_u}$. Note that the algebraic structure of the  Chekhov-Fock algebra $\TT$ depends only on the square $\omega^2$, but that the Weyl quantum ordering depends on the choice of $\omega$.

\subsection{The quantum trace homomorphism}

\begin{thm}[\cite{BonWon1}]
\label{thm:QTrace}
  For $A=\omega^{-2}$, there  exists an injective algebra homomorphism
$$\Tr_\lambda^\omega \col \SSS \to \TT.$$
\end{thm}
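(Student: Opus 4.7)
The plan is to define $\Tr_\lambda^\omega$ by a state sum over the ideal triangulation $\lambda$, verify well-definedness together with the algebra homomorphism property, and then establish injectivity via a leading-term argument. First I would put each representative framed link $K \subset S \times [0,1]$ in \emph{good position} with respect to $\lambda$: the projection of $K$ to $S$ meets each edge $\lambda_i$ transversely with only generic double-point crossings, and the framing is everywhere vertical. The vertical walls above the edges of $\lambda$ then cut $K$ into finitely many pieces $K_T$, each sitting above a triangular face $T$ of $\lambda$ and interpretable as a tangle in $T \times [0,1]$ with endpoints on the lateral boundary.

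Next I would build a local quantum trace on each triangle. For a single ideal triangle $T$ with edges $e_1,e_2,e_3$, I associate a small triangle algebra $\mathcal T^\omega(T)$ generated by variables $X_1,X_2,X_3$ attached to the edges, with skew-commutation relations modelled on the $\sigma_{ij}$-data for $T$. To a piece $K_T$, I would assign a tensor with coefficients in $\mathcal T^\omega(T)$, indexed by boundary states along the $e_i$. These coefficients should be obtained by quantizing the classical shear-coordinate trace formula for $\SL(\C)$, then symmetrizing via the Weyl quantum ordering introduced in \S\ref{sect:CheFock}; the precise coefficients are forced by the requirement that the Kauffman skein relation applied inside $T$ hold at the level of local contributions when $A=\omega^{-2}$.

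Third, I would glue: each edge of $\lambda$ is shared between (at most) two triangles, so summing over compatible boundary states along each edge assembles the local tensors into a single element of $\TT$ associated with the global link $K$. The key verification is that this element depends only on the skein class of $K$ in $\SSSSS(S)$, not on the representative in good position. This reduces to a finite list of local moves (planar isotopies and Reidemeister-like moves within a triangle, moves that slide a crossing across an edge, and moves pushing a strand around a vertex of $\lambda$), and for each such move one checks by direct computation that both sides give the same element of $\TT$, with the Weyl ordering absorbing all commutation factors. The algebra homomorphism property then follows because superposition of links corresponds to multiplication in $\TT$ once both sides are put back in Weyl ordering.

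Finally, for injectivity I would use a leading-term argument: to each framed link $K$ in good position associate its edge-weight system $(k_1,\dots,k_n)\in \Z_{\geq 0}^n$, where $k_i$ counts intersections of $K$ with $\lambda_i$, fix a lexicographic order refining this data, and show that the top monomial of $\Tr_\lambda^\omega([K])$ is $[Z_1^{k_1}\cdots Z_n^{k_n}]$ up to a nonzero scalar. Since framed multicurves with distinct edge-weight systems provide a spanning set for $\SSSSS(S)$, their images are linearly independent and $\Tr_\lambda^\omega$ is injective. The main obstacle will be the well-definedness step: the state-sum formula must be rigid enough that local triangle contributions compose correctly, and in particular several terms must conspire to cancel near the vertices of $\lambda$. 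The specific normalization $A=\omega^{-2}$ and the use of Weyl symmetrization are precisely what make these cancellations go through.
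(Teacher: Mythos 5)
Your sketch matches, in outline, the construction from \cite{BonWon1}; note that the present paper does not reprove Theorem~\ref{thm:QTrace} but quotes it from that reference. The state-sum over an ideal triangulation, the local ``triangle algebra'' tensors obtained by quantizing the classical shear-coordinate trace formula with Weyl ordering, the verification of invariance under a finite list of local moves, and the leading-term argument (with the top monomial of $\Tr_\lambda^\omega([K])$ equal to $[Z_1^{k_1}\cdots Z_n^{k_n}]$ for a multicurve $K$ with edge intersection numbers $k_i$, and with distinct multicurves having distinct edge-weight systems) are all essentially the ingredients of the actual proof. The one place where your account is lighter than the reference is in the handling of crossings: in \cite{BonWon1} the link is not cut purely along the triangulation but through a biangle--triangle decomposition of $S$, with crossings pushed into biangles where the Kauffman bracket acts locally via an $R$-matrix, while the triangles carry only crossingless strands; your phrase about ``moves that slide a crossing across an edge'' gestures at this but conflates the two kinds of faces. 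This separation is what makes the well-definedness checks tractable, so it is worth making explicit.
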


The specific  homomorphism $\Tr_\lambda^\omega$ constructed in \cite{BonWon1} is the \emph{quantum trace homomorphism}. It  is uniquely determined by certain properties stated in that article, but we will only need to know that it exists and that it satisfies the properties given in \S \ref{sect:ChebFrob} below.

\subsection{The Chebyshev and Frobenius homomorphisms}
\label{sect:ChebFrob}

We now assume that $A$ is a primitive $N$--root of $-1$  with $N$  odd.  Recall that $T_N$ denotes the $N$--th normalized Chebyshev polynomial, defined by the property that $\cos N\theta = \frac12 T_N(2\cos \theta)$ for every $\theta$. 

\begin{thm}[\cite{BonWon3}]
\label{thm:ChebSkeinRelation}
When $A$ is a primitive $N$--root of $-1$  with $N$  odd, there is a unique algebra homomorphism $\mathbf T^A \col \SSSS \to \SSS$ such that
$$
\mathbf T^A \bigl(  [K] \bigr) = T_N  \bigl(  [K] \bigr) 
$$
for every framed knot $K\subset S \times [0,1]$ whose projection to $S$ has no crossing and whose framing is vertical. In addition, the image of $\mathbf T^A$ is central in $\SSS$. \qed
\end{thm}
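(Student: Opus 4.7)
The plan is to construct $\mathbf T^A$ by a threading prescription and to establish centrality as the main technical step; the algebra homomorphism property will then follow. \emph{Uniqueness} is immediate: the classes of simple multicurves (disjoint unions of framed knots with no-crossing projection and vertical framing) form a vector space basis of $\SSSS$, and single simple knots already generate $\SSSS$ as an algebra, so prescribing $\mathbf T^A\bigl([K]\bigr)=T_N\bigl([K]\bigr)$ on simple knots determines at most one algebra homomorphism.

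For \emph{existence}, I would first define, for a simple multicurve $L=K_1\sqcup\dots\sqcup K_k$,
$$
\mathbf T^A\bigl([L]\bigr) := T_N\bigl([K_1]\bigr)\, T_N\bigl([K_2]\bigr)\cdots T_N\bigl([K_k]\bigr) \in \SSS,
$$
extended by linearity. This definition is ambiguous a priori, since $\SSS$ is noncommutative. The main step, and the principal technical obstacle, is to show that for every simple knot $K$ the element $T_N\bigl([K]\bigr)$ lies in the center of $\SSS$: for any skein $[L]\in\SSS$,
$$
T_N\bigl([K]\bigr)\cdot [L] = [L]\cdot T_N\bigl([K]\bigr).
$$
By isotopy and a one-crossing-at-a-time reduction, this boils down to a local calculation in a ball containing a single transverse intersection of $K$ with a strand of $L$. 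Threading $K$ replaces the relevant strand by an $N$-fold parallel cable weighted by the coefficients of $T_N$; resolving the $N$ resulting crossings via the Kauffman relation produces a sum of $2^N$ terms, and the hypothesis that $A^2$ is a primitive $N$-th root of unity forces nearly all contributions to cancel in pairs, leaving only terms that reassemble into the opposite ordering. This \emph{miraculous cancellation} is a delicate finite combinatorial identity, morally the skein-theoretic incarnation of the classical trace identity $T_N(\Tr B) = \Tr B^N$ for $B\in\SL(\C)$, and is the heart of the argument.

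Once centrality of every $T_N\bigl([K]\bigr)$ is established, the order of factors in the definition of $\mathbf T^A\bigl([L]\bigr)$ is irrelevant, so the map is well-defined. To verify the homomorphism property, by linearity and the simple-multicurve basis it suffices to check $\mathbf T^A\bigl([L]\cdot[L']\bigr) = \mathbf T^A\bigl([L]\bigr)\cdot \mathbf T^A\bigl([L']\bigr)$ for simple multicurves $L,L'$: expanding $[L]\cdot[L']$ in $\SSSS$ via the $\epsilon$-skein relation at each crossing (note that $\SSSS$ is commutative, since $\epsilon^2=1$), and expanding the product of the threaded versions in $\SSS$ via the $A$-skein relation at each of the cable crossings, the same cancellation phenomenon forces the two expansions to agree term by term. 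Centrality of the generators $T_N\bigl([K]\bigr)$ then immediately implies that the entire image of $\mathbf T^A$ lies in the center of $\SSS$.
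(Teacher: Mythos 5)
The paper states this theorem without proof (note the end-of-proof symbol directly after the statement) and cites \cite{BonWon3}; there is thus no argument of the paper's own to compare against. Evaluating your proposal on its merits: the overall strategy is reasonable and reflects the structure of the proof in \cite{BonWon3}, namely defining $\mathbf T^A$ on the basis of simple multicurves by threading and making a commutation/transparency property of $T_N\bigl([K]\bigr)$ the technical heart. Your uniqueness argument is fine.

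Two things need fixing. First, a minor confusion: for a simple multicurve $L = K_1\sqcup\dots\sqcup K_k$, the proposed definition $\mathbf T^A\bigl([L]\bigr) = T_N\bigl([K_1]\bigr)\cdots T_N\bigl([K_k]\bigr)$ is \emph{not} ambiguous, because the components $K_i$ are disjoint in $S$, hence the skeins $[K_i]$ already commute in $\SSS$; centrality is not needed to make the definition well-posed on basis elements, so the stated motivation for your ``main step'' is off. Second, and more seriously, centrality is not quite the right target, and your verification of the homomorphism property (``the same cancellation phenomenon forces the two expansions to agree term by term'') is an assertion rather than an argument. What is actually needed is a \emph{local} single-crossing identity: if a $T_N$-threaded cable crosses over another strand, resolving the resulting bundle of crossings with the $A$-skein relation yields $\epsilon^{-1}$ times the $0$-resolution plus $\epsilon$ times the $\infty$-resolution, with the cable still threaded. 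This local identity is precisely what the ``miraculous cancellation'' refers to; it implies simultaneously that $\mathbf T^A$ respects the $\epsilon$-skein relation (hence is a well-defined algebra homomorphism) and that the image is central. Your proposal names the phenomenon and describes its flavor but makes no attempt to prove it, and it constitutes essentially the entire content of the theorem.
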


The homomorphism  $\mathbf T^A $ provided by Proposition~\ref{thm:ChebSkeinRelation} is the \emph{Chebyshev homomorphism}. It is a key ingredient in the definition of the invariants of Theorem~\ref{thm:InvariantsExist}. 

There is an analogous and much simpler homomorphism at the level of the Chekhov-Fock algebra, namely the following \emph{Frobenius homomorphism}. 

\begin{prop}
\label{prop:Frobenius}
If  $\iota = \omega^{N^2}$, there  is an algebra homomorphism
$$
\mathbf F^\omega \col \TTT   \to \TT$$ 
which maps each generator $Z_i\in \TTT$ to $Z_i^N\in \TT$, where in the first instance $Z_i\in \TTT$ denotes the generator associated to the $i$--th edge $e_i$ of $\lambda$, whereas the second time $Z_i\in \TT$ denotes the generator of $\TT$ associated to the same edge~$e_i$. \qed
\end{prop}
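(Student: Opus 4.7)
The proposal is to invoke the universal property of the algebra $\TTT$ given by its presentation: it suffices to check that the assignment $Z_i \mapsto Z_i^N$ takes the defining relations of $\TTT$ to identities in $\TT$.

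Concretely, the relations in $\TTT$ read $Z_iZ_j = \iota^{2\sigma_{ij}} Z_jZ_i$ with $\iota=\omega^{N^2}$. So the task reduces to verifying, inside $\TT$, the identity
$$
Z_i^N Z_j^N = \iota^{2\sigma_{ij}}\, Z_j^N Z_i^N.
$$
Starting from $Z_iZ_j = \omega^{2\sigma_{ij}} Z_jZ_i$ in $\TT$ and iterating, one first gets $Z_i Z_j^N = \omega^{2N\sigma_{ij}}\,Z_j^N Z_i$ by pushing $Z_i$ past the $N$ copies of $Z_j$, and then $Z_i^N Z_j^N = \omega^{2N^2\sigma_{ij}}\,Z_j^N Z_i^N$ by repeating this for each of the $N$ copies of $Z_i$. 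Since $\omega^{2N^2\sigma_{ij}} = \iota^{2\sigma_{ij}}$, this is exactly the desired relation.

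Thus the generators $Z_i^N\in \TT$ satisfy the defining relations of $\TTT$, which produces a well-defined algebra homomorphism $\mathbf F^\omega\col \TTT \to \TT$ with $\mathbf F^\omega(Z_i)=Z_i^N$ (and $\mathbf F^\omega(Z_i^{-1})=Z_i^{-N}$, consistent with the fact that $Z_i$ is invertible). There is really no obstacle here: the essential content is the single computation above, which is the quantum analogue of the classical Frobenius $x\mapsto x^N$ and whose verification only uses that $\iota=\omega^{N^2}$. The proposition would have failed for any other choice of parameter $\iota$, which both explains and motivates the specific value $\iota=\omega^{N^2}$ appearing in the statement.
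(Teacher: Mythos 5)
Your proof is correct and is precisely the routine verification the paper leaves implicit (the proposition is stated with a \qed and no written argument, indicating the authors regard it as immediate). The computation $Z_i^N Z_j^N = \omega^{2N^2\sigma_{ij}} Z_j^N Z_i^N = \iota^{2\sigma_{ij}} Z_j^N Z_i^N$ followed by an appeal to the universal property of the presentation is exactly what is needed, and your remark that the choice $\iota=\omega^{N^2}$ is forced by this computation is a good one.
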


Note that $\iota^2 = \omega^{2N^2} = A^{-N^2} =(-1)^N =-1$, so that $\iota = \pm \mathrm i$. 

The  following compatibility statement, which connects the Chebyshev homomorphism to the Frobenius homomorphism through appropriate  quantum trace homomorphisms, is fundamental for our arguments. This result encapsulates the miraculous cancellations of \cite{BonWon3}.

\begin{thm}[\cite{BonWon3}]
\label{thm:ChebyQTracesFrob}
The diagram 
$$
\xymatrix{
\SSS
 \ar[r]^{\Tr_{\lambda}^\omega} 
 & \TT\\
\SSSS
 \ar[r]^{\Tr_{\lambda}^\iota}
 \ar[u]^{\mathbf T^A}
 & \TTT
 \ar[u]_{\mathbf F^\omega}}
$$
is commutative. Namely, for every skein $[K]\in \SSSS$, the quantum trace $\Tr_\lambda^\omega\bigl(\mathbf T^A\bigl( [K]\bigr) \bigr)$ of $ \mathbf T^A\bigl([K]\bigr)$ is obtained from the classical trace polynomial $\Tr_\lambda^\iota \bigl([K]\bigr)$ by replacing each generator $Z_i \in \TTT$ by $Z_i^N\in \TT$. \qed
\end{thm}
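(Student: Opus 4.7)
My plan is to verify the equality of the two algebra homomorphisms
$\mathbf F^\omega \circ \Tr_\lambda^\iota$ and $\Tr_\lambda^\omega \circ \mathbf T^A$ from $\SSSS$ to $\TT$ on a generating set. Both compositions are algebra homomorphisms, so it suffices to check agreement on classes $[K]$ of framed knots whose projection to $S$ has no crossings and whose framing is vertical: such knots generate $\SSSS$ as an algebra, because the skein relation (with parameter $\epsilon=\pm1$) lets one untangle any crossing and express an arbitrary skein as a polynomial in such simple classes. For such a generator, $\mathbf T^A([K]) = T_N([K])$ by the defining property of the Chebyshev homomorphism, so the goal reduces to the identity
$$
\Tr_\lambda^\omega\bigl(T_N([K])\bigr) = \mathbf F^\omega\bigl(\Tr_\lambda^\iota([K])\bigr).
$$

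The first step is to write both sides using the explicit construction of the quantum trace homomorphism from \cite{BonWon1}. That construction expresses $\Tr_\lambda^\omega([K])$ as a Weyl-ordered state sum over the triangles of $\lambda$, with local factors attached to the intersection pattern of $K$ with the edges $\lambda_i$ and weights given by powers of $\omega$ coming from the skew-commutation relations $Z_iZ_j = \omega^{2\sigma_{ij}}Z_jZ_i$. The same recipe at parameter $\iota$ produces $\Tr_\lambda^\iota([K])$, as a Laurent polynomial in the generators $Z_i \in \TTT$. The Frobenius image $\mathbf F^\omega\bigl(\Tr_\lambda^\iota([K])\bigr)$ is obtained by substituting $Z_i \mapsto Z_i^N$ in each monomial; the exponent match $\omega^{2N^2\sigma_{ij}} = \iota^{2\sigma_{ij}}$ ensures this substitution is well-defined as a homomorphism.

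The second step is the computational heart. One computes $T_N([K]) \in \SSS$ by expanding the $N$-fold product $[K]^N$ using the skein relation at parameter $A$, then forming the appropriate combination prescribed by the Chebyshev recursion $T_{n+1}(x) = xT_n(x) - T_{n-1}(x)$. Applying $\Tr_\lambda^\omega$ term by term produces a sum of state sums attached to the various resolutions of the $N$-fold self-superposition of $K$. One would then organize the sum by grouping together terms that contribute to the same monomial in the $Z_i$, and show that almost all contributions cancel, leaving exactly the Weyl-ordered monomials in the $Z_i^N$ predicted by $\mathbf F^\omega(\Tr_\lambda^\iota([K]))$. Locality of the state sum should allow reduction to an analysis inside a single triangle, where only a small number of configurations of crossings and resolutions must be checked.

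The main obstacle is precisely this cancellation: a priori, $[K]^N$ in $\SSS$ produces an explosion of crossing resolutions, and the quantum trace of each is a complicated Laurent polynomial in the $Z_i$. The miracle is that the combinatorics of the Chebyshev recursion interacts perfectly with the Weyl-ordering $\omega^{-\sum n_un_v\sigma_{i_ui_v}}$-corrections, so that the surviving terms are precisely the $N$-th powers $Z_i^N$ of the original state-sum factors, with corrections matching the passage from $\omega$ to $\iota = \omega^{N^2}$. Controlling this cancellation in a closed form — rather than verifying it term-by-term — is the crux of the proof, and is exactly the content of the ``miraculous cancellations'' established in \cite{BonWon3}; the strategy I would pursue is to isolate a minimal local identity in a single triangle and then globalize it by multiplicativity of the Weyl-ordered state sum.
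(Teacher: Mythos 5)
This theorem is not proved in the present paper: it is stated with the attribution \cite{BonWon3} and imported from that earlier article, so there is no in-paper argument to compare against. Evaluating your proposal on its own merits, the reduction is sound as far as it goes: $\mathbf F^\omega \circ \Tr_\lambda^\iota$ and $\Tr_\lambda^\omega \circ \mathbf T^A$ are algebra homomorphisms, $\SSSS$ is generated by classes of crossingless framed knots with vertical framing (simple multicurves are products of their components), and for such a generator $[K]$ the claim becomes $\Tr_\lambda^\omega\bigl(T_N([K])\bigr) = \mathbf F^\omega\bigl(\Tr_\lambda^\iota([K])\bigr)$. Your check that $\iota = \omega^{N^2}$ makes $Z_i \mapsto Z_i^N$ a well-defined homomorphism $\TTT \to \TT$ is also correct, though that is just Proposition~\ref{prop:Frobenius}.

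The gap is at the crux, and it is a real one. You do not prove the identity for a generator $[K]$; you describe what a proof would have to accomplish (expand $T_N([K])$ via the skein relation, apply the state sum, group terms, hope for massive cancellation down to $N$-th powers), and then state that controlling this cancellation ``is exactly the content of the `miraculous cancellations' established in \cite{BonWon3}.'' Since Theorem~\ref{thm:ChebyQTracesFrob} \emph{is} a result of \cite{BonWon3} — and, per the introduction, it is essentially \emph{the} miraculous cancellation result of that paper — invoking it here makes the attempt circular rather than blind. What is actually needed, and what your sketch does not supply, is the local computation: the explicit behavior, on a single triangle (or biangle) of $\lambda$, of the quantum trace state sum when $[K]$ is replaced by the $N$-fold superposition $[K]^N$, and the algebraic identity showing that the Chebyshev combination collapses the resulting sum of local contributions to the $N$-th power of the original local contribution at parameter $\iota$. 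Without exhibiting that identity (or at least reducing to a precisely stated local lemma and proving it), the proposal remains a plan rather than a proof.
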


\section{The balanced Chekhov-Fock  algebra}

\subsection{The balanced Chekhov-Fock  algebra}
The quantum trace homomorphism  $\Tr^\omega_\lambda$ of Theorem~\ref{thm:QTrace} (and \cite{BonWon1}) is far from being surjective. Indeed, for a skein $[K]\in \SSS$ represented by a framed link $K \subset S \times [0,1]$,  the exponents of the monomials $Z_1^{k_1}Z_2^{k_2} \dots Z_n^{k_n}$ appearing in the expression of $\Tr^\omega_\lambda\bigl([K]\bigr)$  are \emph{balanced}, in the sense that they satisfy the following {parity condition}: for every triangle $T_j$ of the ideal triangulation $\lambda$, the sum $k_{i_1}+ k_{i_2}+ k_{i_3}$ of the exponents of the generators $Z_{i_1}$, $Z_{i_2}$, $Z_{i_3}$ associated to the sides of $T_j$ is even. 

Let $\ZZ$ denote the sub-algebra of $\TT$ generated by all monomials satisfying this exponent parity condition. By definition, $\ZZ$ is the \emph{balanced Chekhov-Fock algebra} of the ideal triangulation $\lambda$. It is designed so that the quantum trace homomorphism restricts to a homomorphism $\Tr^\omega_\lambda \colon \SSS \to \ZZ$. 

\begin{figure}[htbp]

\SetLabels
( .3* .25) $T_j $ \\
( .5*.57 ) $\tau_\lambda $ \\
\endSetLabels
\centerline{\AffixLabels{ \includegraphics[width=4cm]{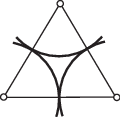}}}
 \caption{}
\label{fig:TriangleTrainTrack}
\end{figure}

To keep track of the exponent parity condition defining the monomials of $\ZZ$, it is convenient to consider a train track $\tau_\lambda$ which, on each triangle $T_j$ of the ideal triangulation $\lambda$, looks as in Figure~\ref{fig:TriangleTrainTrack}. In particular, $\tau_\lambda$ has one switch for each edge of $\lambda$, and three edges for each triangle of $\lambda$. Let $\mathcal W(\tau_\lambda; \Z)$ be the set of integer edge weight systems $\alpha$ for $\tau_\lambda$, assigning a number $\alpha(e) \in \Z$ to each edge $e$ of $\tau_\lambda$ in such a way that, at each switch, the weights of the edges incoming on one side add up to the sum of the weights of the edges outgoing on the other side. There is a natural  map $\mathcal W(\tau_\lambda; \Z) \to \Z^n$ which, given an edge weight system,  associates to each of the $n$ switches  of $\tau_\lambda$ the sum of the weights of the edges incoming on any side of the switch. Then, an element $(k_1, k_2, \dots, k_n) \in \Z^n$ satisfies the above parity condition if and only if  it is in the image of this map. Also, the map $\mathcal W(\tau_\lambda; \Z) \to \Z^n$ is easily seen to be injective. Since the image of this map has finite index, it follows that $\mathcal W(\tau_\lambda; \Z)$ is isomorphic to $\Z^n$ as an abelian group. 

This enables us to give a different description of $\ZZ$. For a weight system $\alpha \in \mathcal W(\tau_\lambda; \Z)$, assigning a weight $\alpha_i\in \Z$ to the $i$--th edge $e_i$ of $\lambda$ (= the $i$--th switch of $\tau_\lambda$),  define
$$
Z_\alpha = [ Z_1^{\alpha_1}Z_2^{\alpha_2} \dots Z_n^{\alpha_n}] \in \ZZ
$$
where the bracket $[\enspace]$ denotes the Weyl quantum ordering defined in \S \ref{sect:CheFock}. 

The above discussion proves the following fact.

\begin{lem}
\label{lem:TrainTrackSqRootCheFock}
As $\alpha \in \mathcal W(\tau_\lambda; \Z)$ ranges over all weight systems for the train track $\tau_\lambda$, the associated  monomials $Z_\alpha$ form a basis for the vector space $\ZZ$. \qed
\end{lem}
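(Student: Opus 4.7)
The plan is to combine two observations: first, the Weyl-ordered monomials $[Z_1^{k_1}Z_2^{k_2}\dots Z_n^{k_n}]$ as $(k_1,\dots,k_n)$ ranges over $\Z^n$ form a vector space basis of the full Chekhov-Fock algebra $\TT$; second, under this basis, $\ZZ$ corresponds exactly to the span of those Weyl monomials whose exponent tuples satisfy the triangle parity condition. Once these are in place, the identification of $\mathcal W(\tau_\lambda;\Z)$ with the balanced tuples via the injection $\mathcal W(\tau_\lambda;\Z)\to\Z^n$ that was described above translates directly into the desired bijection $\alpha\mapsto Z_\alpha$ between $\mathcal W(\tau_\lambda;\Z)$ and a basis of $\ZZ$.

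For the first observation, I would invoke the standard quantum-torus fact that the ordered monomials $Z_1^{k_1}Z_2^{k_2}\dots Z_n^{k_n}$ with $(k_1,\dots,k_n)\in\Z^n$ form a basis of $\TT$. Since each Weyl-ordered monomial $[Z_1^{k_1}\dots Z_n^{k_n}]$ differs from $Z_1^{k_1}\dots Z_n^{k_n}$ by a nonzero scalar $\omega^{-\sum_{u<v}k_uk_v\sigma_{uv}}$, these form a basis as well.

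For the second observation, note that the set $B\subset\Z^n$ of exponent tuples satisfying the parity condition is a subgroup of $\Z^n$, since it is the kernel of the $\Z$-linear map $\Z^n\to(\Z/2)^F$ sending $(k_1,\dots,k_n)$ to the triangle-sums mod $2$. The product of any two balanced Weyl-monomials is therefore a scalar multiple of another balanced Weyl-monomial, so the span of $\{[Z_1^{k_1}\dots Z_n^{k_n}]:(k_1,\dots,k_n)\in B\}$ is a subalgebra of $\TT$. By construction it contains every generator of $\ZZ$, and conversely every generator of $\ZZ$ is a balanced monomial, hence (after Weyl reordering) a scalar multiple of a balanced Weyl-monomial. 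So this span coincides with $\ZZ$, and the balanced Weyl-monomials are linearly independent since they are a subset of a basis of $\TT$.

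Combining this with the already-established identification of $\mathcal W(\tau_\lambda;\Z)$ with $B\subset\Z^n$, the map $\alpha\mapsto Z_\alpha=[Z_1^{\alpha_1}\dots Z_n^{\alpha_n}]$ is a bijection from $\mathcal W(\tau_\lambda;\Z)$ onto the basis $\{[Z_1^{k_1}\dots Z_n^{k_n}]:(k_1,\dots,k_n)\in B\}$ of $\ZZ$. There is no real obstacle here: the lemma is essentially a bookkeeping consequence of the preceding discussion, and the only point that requires any care is verifying that the image of $\mathcal W(\tau_\lambda;\Z)\to\Z^n$ is exactly $B$, which in turn follows from checking at each switch that the switch condition for $\tau_\lambda$ forces the three edge-sums around each triangle to have the same parity as twice an edge weight.
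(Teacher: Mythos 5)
Your argument is correct and follows the same line as the paper, which treats this lemma as an immediate consequence of the preceding discussion (monomial basis of the quantum torus $\TT$, the parity condition, and the identification of $\mathcal W(\tau_\lambda;\Z)$ with the set $B$ of balanced exponent tuples). One small nit: your closing sentence only sketches the direction "image $\subseteq B$"; the inclusion $B\subseteq$ image requires solving, on each triangle, the $3\times 3$ linear system expressing the three switch-sums in terms of the three interior edge weights, which has an integral solution precisely when the parity condition holds — but this is routine and the paper likewise asserts it without proof.
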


We can elaborate a little on the structure of the group $ \mathcal W(\tau_\lambda; \Z)$. By definition of the parity condition, $ \mathcal W(\tau_\lambda; \Z)\subset \Z^n $ contains the subset $(2\Z)^n$ consisting of all switch weight systems $(\alpha_1, \alpha_2, \dots, \alpha_n) \in \Z^n$ where the $\alpha_i$ are even. Also, given $\alpha \in \mathcal W(\tau_\lambda; \Z)$, we can define a chain with coefficients in $\Z_2$ by endowing each edge $e$ of the train track $\tau_\lambda$ with the modulo 2 reduction of the weight $\alpha(e) \in \Z$. The switch relations guarantee that this chain is closed, and this defines a natural homomorphism $\mathcal W(\tau_\lambda; \Z) \to H_1(S; \Z_2)$. 

\begin{lem}
\label{lem:StructureWeightSystems}
The  inclusion map and homomorphism above define an exact sequence
$$
0 \to (2\Z)^n \to \mathcal W(\tau_\lambda; \Z) \to H_1(S; \Z_2) \to 0.
$$
\end{lem}
\begin{proof}
The homomorphism $ \mathcal W(\tau_\lambda; \Z) \to H_1(S; \Z_2)$ can also be expressed in terms of the dual graph $\Gamma_\lambda$ of the triangulation $\lambda$. Indeed, the class $[\alpha ] \in H_1(S; \Z_2)$ induced by  $\alpha \in \mathcal W(\tau_\lambda; \Z)$ is also realized by endowing each edge $f_i$ of $\Gamma_\lambda$ with the modulo 2 reduction of the switch weight $\alpha_i$ associated by $\alpha$ to the  edge $e_i$ of $\lambda$ that is dual to $f_i$; the parity condition guarantees that this chain is really closed.  The result then immediately follows from the definitions, and from the isomorphism $H_1(\Gamma_\lambda ; \Z_2)\cong H_1(S; \Z_2)$ coming from the fact that the surface $S$ deformation retracts to the dual graph $\Gamma_\lambda$. 
\end{proof}

Note that the exact sequence of Lemma~\ref{lem:StructureWeightSystems} admits no partial splitting. 

\subsection{The algebraic structure of the balanced Chekhov-Fock  algebra}
\label{subsect:AlgStructCheFock}

We first describe the multiplicative structure of the balanced Chekhov-Fock algebra  $\ZZ$ in the context of Lemma~\ref{lem:TrainTrackSqRootCheFock}. 

The space $ \mathcal W(\tau_\lambda; \Z)$ carries a very natural antisymmetric bilinear form
$$
\Omega \col  \mathcal W(\tau_\lambda; \Z) \times  \mathcal W(\tau_\lambda; \Z) \to \Z,
$$
the \emph{Thurston intersection form} defined by the property that, for $\alpha$, $\beta\in  \mathcal W(\tau_\lambda; \Z)$,
$$
\Omega(\alpha, \beta) =   {\textstyle\frac12}\kern -10pt \sum_{e \text{ right of }e'} \kern -10pt  \bigl( \alpha(e)\beta(e') - \alpha(e')\beta(e) \bigr)
$$
where the sum is over all pairs $(e,e')$ of edges of $\tau_\lambda$ such that $e$ and $e'$ come out of the same side of some switch of $\tau_\lambda$, with $e$ to the right of $e'$.  See Lemma~\ref{lem:ThurstonHalfIntersection} in the  Appendix for a more conceptual interpretation of $\Omega$,  and for a proof that  $\Omega(\alpha, \beta)$ is really an integer.

\begin{lem}
\label{lem:SqRootCheFockThurston}
For every $\alpha$, $\beta\in  \mathcal W(\tau_\lambda; \Z)$,
$$
Z_\alpha Z_\beta = \omega^{2\Omega(\alpha, \beta)} Z_{\alpha+\beta}.
$$
In particular, $Z_\alpha Z_\beta =  \omega^{4\Omega(\alpha, \beta)} Z_\beta Z_\alpha$.
\end{lem}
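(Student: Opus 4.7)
The plan is to prove the lemma by a direct algebraic calculation that reduces the identity to an assertion comparing two bilinear forms, one built from the commutation coefficients $\sigma_{ij}$ and the other from the Thurston form $\Theta$, which one then identifies by a local geometric argument.

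First I would unwind the Weyl ordering and compute $Z_\alpha Z_\beta$ explicitly. Writing
\[
Z_\alpha = \omega^{-\sum_{u<v}\alpha_u\alpha_v\sigma_{uv}}\, Z_1^{\alpha_1}\cdots Z_n^{\alpha_n}
\]
(and similarly for $Z_\beta$ and $Z_{\alpha+\beta}$), the product $Z_\alpha Z_\beta$ picks up the combined Weyl prefactor and the monomial $Z_1^{\alpha_1}\cdots Z_n^{\alpha_n}Z_1^{\beta_1}\cdots Z_n^{\beta_n}$. The commutation relation $Z_iZ_j=\omega^{2\sigma_{ij}}Z_jZ_i$ implies $Z_j^{a}Z_i^{b}=\omega^{-2ab\sigma_{ij}}Z_i^{b}Z_j^{a}$ for $i<j$, so successively moving each $Z_i^{\beta_i}$ to the left past every $Z_j^{\alpha_j}$ with $j>i$ contributes a further factor $\omega^{-2\sum_{i<j}\alpha_j\beta_i\sigma_{ij}}$. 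Combining this reordering factor with the Weyl prefactor of $Z_{\alpha+\beta}$ and simplifying gives $Z_\alpha Z_\beta = \omega^{E}\,Z_{\alpha+\beta}$ with
\[
E=\sum_{u<v}(\alpha_u\beta_v-\alpha_v\beta_u)\,\sigma_{uv}=\sum_{u,v}\alpha_u\beta_v\,\sigma_{uv},
\]
the last equality following from the antisymmetry $\sigma_{vu}=-\sigma_{uv}$.

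At this stage the lemma reduces to the bilinear identity
\[
2\Theta(\alpha,\beta)=\sum_{u,v}\alpha_u\beta_v\,\sigma_{uv}.
\]
Both sides are bilinear and admit a local decomposition: $\sigma_{uv}$ splits as a sum of contributions at each puncture $p$ of $S$, counting cyclically adjacent pairs of edge-ends at $p$, whereas $\Theta$ splits as a sum of contributions at each switch of $\tau_\lambda$. The geometric matching uses the bijection between the corners of triangles of $\lambda$ at a puncture $p$ and the edges of $\tau_\lambda$ lying in those triangles together with the fact that, at each switch on $\lambda_i$, the weight $\alpha_i$ is the sum of the weights on the two train-track edges on either side. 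Matching the local expansions term by term yields the identity; this is the conceptual content referred to in the paper and is essentially carried out in Lemma~\ref{lem:ThurstonHalfIntersection} of the Appendix (where it also implies $\Theta(\alpha,\beta)\in\Z$).

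The second assertion is then immediate: applying the first part to $Z_\beta Z_\alpha$ and using the antisymmetry $\Theta(\beta,\alpha)=-\Theta(\alpha,\beta)$ gives $Z_\beta Z_\alpha = \omega^{-2\Theta(\alpha,\beta)}Z_{\alpha+\beta}$, so dividing yields $Z_\alpha Z_\beta = \omega^{4\Theta(\alpha,\beta)}Z_\beta Z_\alpha$. The one nontrivial step is the geometric matching in the previous paragraph, and this is exactly what the Appendix is designed to handle.
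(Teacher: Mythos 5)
Your algebraic reduction is correct: unwinding the Weyl prefactors of $Z_\alpha$, $Z_\beta$, $Z_{\alpha+\beta}$ and collecting the reordering cost gives $Z_\alpha Z_\beta = \omega^{E} Z_{\alpha+\beta}$ with $E=\sum_{u,v}\alpha_u\beta_v\sigma_{uv}$, so both assertions reduce to the combinatorial identity $\sum_{u,v}\alpha_u\beta_v\sigma_{uv}=2\Theta(\alpha,\beta)$. This is essentially the calculation the paper calls ``a simple computation,'' although the paper organizes it in the opposite direction: it first observes the commutation relation $Z_\alpha Z_\beta=\omega^{4\Theta(\alpha,\beta)}Z_\beta Z_\alpha$, and then uses the symmetry built into the Weyl normalization --- which forces the scalars in $Z_\alpha Z_\beta=cZ_{\alpha+\beta}$ and $Z_\beta Z_\alpha=c'Z_{\alpha+\beta}$ to satisfy $cc'=1$ --- to pin down $c=\omega^{2\Theta(\alpha,\beta)}$. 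Your route, which computes $c$ directly and reads off the commutation relation afterwards, is equally valid and somewhat more explicit.

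The one genuine inaccuracy is the attribution of the key identity to Lemma~\ref{lem:ThurstonHalfIntersection}. That lemma proves a \emph{different} statement: it interprets $\Theta$ as half the homological intersection pairing on the orientation double cover $\widehat U$ of a neighborhood of an arbitrary train track, and is used to show $\Theta$ is integer-valued. It makes no contact with the coefficients $\sigma_{ij}=a_{ij}-a_{ji}$, which come from the combinatorics of the ideal triangulation $\lambda$ (edge ends succeeding one another around a puncture) rather than from the train track $\tau_\lambda$ itself. In fact the identity $\sum_{u,v}\alpha_u\beta_v\sigma_{uv}=2\Theta(\alpha,\beta)$ is nowhere proved in the paper --- it is precisely what is left implicit in the phrase ``simple computation.'' Your sketch of the verification (matching corners of triangles at each puncture with the corresponding corner edges of $\tau_\lambda$, and using the switch relation to express the $\tau_\lambda$-edge weights such as $\alpha(e_{ij})=\tfrac12(\alpha_i+\alpha_j-\alpha_k)$ in terms of the switch weights $\alpha_i$) is the right route, but it needs to be carried out directly rather than deferred to an appendix lemma that establishes something else.
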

\begin{proof}
The second statement, that $Z_\alpha Z_\beta =  \omega^{4\Omega(\alpha, \beta)} Z_\beta Z_\alpha$, is a simple computation. After observing that this property holds for any $\omega$ (not just roots of unity), the  first statement, that $Z_\alpha Z_\beta = \omega^{2\Omega(\alpha, \beta)} Z_{\alpha+\beta}$, then follows by definition of the Weyl quantum ordering.
\end{proof}

This is particularly simple if we replace $\omega$ by $\iota=\omega^{N^2}$, with the assumption that $A^{2N}=1$ so that $\iota^{4} = \omega^{4N^2} =A^{-2N^2}=1$. 

\begin{cor}
\label{cor:SquareRootChefockCommutativeIota}
If $\iota^4=1$, the algebra $\ZZZ$ is commutative. \qed
\end{cor}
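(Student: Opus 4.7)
The proof is essentially immediate from the preceding two lemmas, so my plan is a short reduction rather than anything requiring new ideas. First I would invoke Lemma~\ref{lem:SqRootCheFockThurston} in the setting where the deformation parameter is $\iota$ rather than $\omega$ (the derivation of the lemma only uses the defining skew-commutation of the generators and the Weyl ordering, both of which are available for any nonzero scalar parameter). This gives, for all weight systems $\alpha, \beta \in \mathcal W(\tau_\lambda;\Z)$, the identity
$$
Z_\alpha Z_\beta = \iota^{4\Theta(\alpha,\beta)} Z_\beta Z_\alpha
$$
in $\ZZZ$.

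Next I would use the hypothesis $\iota^4 = 1$ together with the fact that $\Theta(\alpha,\beta)\in \Z$ (this integrality is where we rely on Lemma~\ref{lem:ThurstonHalfIntersection} from the Appendix, which ensures that the a priori half-integer formula defining $\Theta$ actually lands in $\Z$). From $\iota^4 = 1$ we get $\iota^{4\Theta(\alpha,\beta)} = 1$, so $Z_\alpha Z_\beta = Z_\beta Z_\alpha$ for every pair of basis elements.

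Finally I would conclude by Lemma~\ref{lem:TrainTrackSqRootCheFock}: since the $Z_\alpha$ form a vector space basis of $\ZZZ$, and since multiplication in $\ZZZ$ is bilinear, the pairwise commutation of basis elements extends to commutativity of arbitrary elements. The main (and only real) point to flag is the integrality of $\Theta$, which is the substantive content here and is handled in the Appendix; everything else is a one-line bookkeeping step.
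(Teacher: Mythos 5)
Your argument is correct and is exactly the route the paper intends: apply Lemma~\ref{lem:SqRootCheFockThurston} with $\iota$ in place of $\omega$, use $\iota^4=1$ and the integrality of $\Theta$ (Lemma~\ref{lem:ThurstonHalfIntersection}) to see that the $Z_\alpha$ pairwise commute, and conclude via the basis given in Lemma~\ref{lem:TrainTrackSqRootCheFock}. You correctly identified the integrality of $\Theta$ as the only nontrivial ingredient.
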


In general, the key to understanding the algebraic structure of $\ZZ$ is Lemma~\ref{lem:StructureThurston} below. 

For $k=1$, \dots, $s$, the $k$--th puncture of $S$ specifies an element $\eta_k \in  \mathcal W(\tau_\lambda; \Z)$ defined as follows: For every edge $e$ of $\tau_\lambda$, the edge weight  $\eta_k(e)\in \{0,1,2\}$ is the number of sides of $e$ that are adjacent to the same component of $S-\tau_\lambda$ as this puncture.  

Recall that the surface $S$ has genus $g$ and $s$ punctures. 

\begin{lem}
\label{lem:StructureThurston}

The lattice $ \mathcal W(\tau_\lambda; \Z) \cong \Z^n$ admits a basis in which the matrix of the Thurston intersection form $\Omega$ is block diagonal with $g$ blocks 
$\begin{pmatrix}
0&1\\-1&0
\end{pmatrix}$, 
$2g+s-3$ blocks 
$\begin{pmatrix}
0&2\\-2&0
\end{pmatrix}$
and $s$ blocks
$\begin{pmatrix}
0
\end{pmatrix}$. In addition, the kernel of $\Omega$ is freely generated by the elements $\eta_1$, $\eta_2$, \dots, $\eta_s \in  \mathcal W(\tau_\lambda; \Z) $ associated to the punctures of $S$ as above. 
\end{lem}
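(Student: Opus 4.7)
The plan is to split the proof into identifying the kernel of $\Theta$ and pinning down the normal form on the quotient. For the kernel, I would first verify by direct computation that each peripheral weight $\eta_k$ satisfies $\Theta(\eta_k, \alpha) = 0$ for every $\alpha \in \mathcal W(\tau_\lambda; \Z)$: the weight $\eta_k$ is supported on the edges bordering the cusp region surrounding $v_k$, and the contributions to $\Theta(\eta_k, \alpha)$ at consecutive switches around that region telescope using the switch condition satisfied by $\alpha$. Linear independence of $\eta_1, \dots, \eta_s$ is immediate: near each puncture $v_k$ there is an edge of $\tau_\lambda$ whose two sides both border the $v_k$-component of $S\setminus \tau_\lambda$, and on that edge $\eta_k$ has weight $2$ while all other $\eta_j$ vanish. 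So $\mathrm{rank}\,\ker\Theta \geq s$; equality will follow from the normal form count.

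For the normal form, apply the classification of antisymmetric bilinear forms over $\Z$: there is a basis in which $\Theta$ is block-diagonal with blocks $\begin{pmatrix}0&d_i\\-d_i&0\end{pmatrix}$, $d_1 | d_2 | \dots | d_k$, together with zero blocks. Two invariants identify the $d_i$. First, the mod-$2$ rank: I would construct a parity homomorphism $\pi\col \mathcal W(\tau_\lambda; \Z) \to H_1(S; \Z/2\Z)$ sending $\alpha$ to the $\Z/2$-homology class of the subgraph of $\tau_\lambda \subset S$ formed by the odd-weight edges (the switch condition modulo $2$ is precisely the cycle condition). The map is surjective since $\tau_\lambda$ is filling, and a direct check of the formula for $\Theta$ shows $\Theta \bmod 2 = \pi^{*}\iota$, where $\iota$ is the mod-$2$ intersection form on $H_1(S; \Z/2\Z)$; the latter has rank $2g$ on a surface of genus $g$ with $s\geq 1$ punctures, with kernel generated by the peripheral classes. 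Hence the mod-$2$ rank of $\Theta$ is $2g$, so exactly $g$ of the $d_i$ are odd. Exhibiting an explicit pair $(\alpha, \beta)$ with $\Theta(\alpha, \beta) = \pm 1$ (for instance from weights realizing a symplectic pair of simple closed curves in $H_1(S; \Z)$ carried by $\tau_\lambda$) shows that $d_1 = 1$, and the divisibility chain then forces all $g$ odd invariant factors to equal~$1$.

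For the remaining $2g+s-3$ nonzero blocks I would compute the Pfaffian $\prod d_i = 2^{2g+s-3}$ on $\mathcal W(\tau_\lambda; \Z)/\ker\Theta$. Combined with the divisibility chain and the fact that the Pfaffian is a power of $2$, the remaining $d_i$ (even and mutually divisible) must all be powers of $2$, and an exponent-counting argument with $2g+s-3$ terms each $\geq 2$ whose product is $2^{2g+s-3}$ forces every one of them to equal $2$. A rank count then gives the number of zero blocks as $n - 2(3g+s-3) = s$, matching the lower bound $\mathrm{rank}\,\ker\Theta \geq s$ and identifying $\ker\Theta = \bigoplus_k \Z\eta_k$ after a local check near each puncture that the $\eta_k$ generate the full kernel sublattice, not just a finite-index subgroup. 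The main obstacle is the Pfaffian computation; I would attempt it either by exhibiting an explicit geometric basis of $\mathcal W(\tau_\lambda; \Z)$ built from a pants decomposition of $S$ and dual arcs, in which $\Theta$ block-diagonalizes pants-by-pants, or by induction on topological complexity via flips of $\lambda$ (which act as explicit symplectic isomorphisms on $(\mathcal W(\tau_\lambda;\Z), \Theta)$), reducing to small base cases such as the three-punctured sphere, the once-punctured torus, and the four-punctured sphere. The unexpected coexistence of $\begin{pmatrix}0&1\\-1&0\end{pmatrix}$ and $\begin{pmatrix}0&2\\-2&0\end{pmatrix}$ blocks flagged in the introduction makes it clear that this step is genuinely number-theoretic and not merely a formal consequence of the mod-$2$ picture.
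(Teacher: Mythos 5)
Your plan takes a genuinely different route from the paper. You first invoke the classification of integral antisymmetric forms (invariant factors $d_1\mid d_2\mid\cdots\mid d_k$ plus zero blocks), then try to pin down the $d_i$ by computing two invariants: the mod-$2$ rank via a parity homomorphism $\pi\col \mathcal W(\tau_\lambda;\Z)\to H_1(S;\Z/2)$, and the Pfaffian on $\mathcal W(\tau_\lambda;\Z)/\ker\Theta$. The paper instead works throughout with the orientation double cover $\widehat\tau\to\tau$: it identifies $\mathcal W(\tau;\Z)$ with the $(-1)$--eigenspace $H_1(\widehat\tau;\Z)^-$ and $2\Theta$ with the intersection form (Lemma~\ref{lem:ThurstonHalfIntersection}), then splits the thickening $\widehat U$ along a curve separating the branched part from the genus part and runs a Mayer--Vietoris computation in explicit bases (Lemmas~\ref{lem:ExactSeq}--\ref{lem:ComputeU1}), reading off the blocks directly. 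Both strategies are in principle workable, but the paper's is fully explicit while yours outsources the hard content to invariants you haven't yet computed.

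The decisive gap is the Pfaffian. Establishing $\prod_i d_i = 2^{2g+s-3}$ on $\mathcal W(\tau_\lambda;\Z)/\ker\Theta$ is, as you say, ``the main obstacle,'' and neither of your suggested routes (pants decomposition; induction by flips) is carried out. This is essentially the entire content of the lemma: once the Pfaffian is known to be $2^{2g+s-3}$ with $3g+s-3$ invariant factors, the mod-$2$ rank then forces the $(1^g,2^{2g+s-3})$ pattern. Without it, the argument does not close. A secondary gap is the asserted identity $\Theta \bmod 2 = \pi^*\iota$: it is plausible but not obvious, since $\Theta$ is naturally the intersection form on the double cover $\widehat U$, not on $S$ itself, and you would need to justify both that identity and the surjectivity of $\pi$ onto $H_1(S;\Z/2)$ (not every $\Z/2$--cycle in $\tau_\lambda$ need arise as the odd-weight locus of an integral weight system). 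Relatedly, your suggestion to get $d_1 = 1$ from a ``symplectic pair of simple closed curves carried by $\tau_\lambda$'' conflates $\Theta$ with the algebraic intersection number on $S$; two curves carried by the same train track are disjoint, and their Thurston pairing is not their intersection number. Finally, the sentence ``the divisibility chain then forces all $g$ odd invariant factors to equal $1$'' is a non sequitur from $d_1=1$ alone (e.g.\ $1\mid 3$); it only follows once you already know the Pfaffian is a power of $2$, at which point the $d_1=1$ step is redundant. In short: the strategy is coherent and genuinely different from the paper's, but the load-bearing computations are left undone.
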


\begin{proof}
This is a special case of a result given by Theorem~\ref{thm:StructureThurston} in the Appendix, which determines the algebraic structure of the Thurston intersection form for a general train track $\tau$. When applying this result to the train track $\tau_\lambda$, the numbers $h$, $n_{\mathrm{even}}$ and $n_{\mathrm{odd}}$ of Theorem~\ref{thm:StructureThurston} are respectively equal to the genus $g$ of the surface $S$,  to the number $s$ of punctures of $S$, and  to the number $4g+2s-4$ of triangles of the ideal triangulation $\lambda$. 
\end{proof}

The combination of Lemmas~\ref{lem:TrainTrackSqRootCheFock}, \ref{lem:SqRootCheFockThurston} and \ref{lem:StructureThurston}  now provides the complete algebraic structure of the  balanced Chekhov-Fock algebra $\ZZ$. Let $\mathcal W^q$ denote the algebra, known as the quantum 2--torus,  defined by generators $X^{\pm1}$, $Y^{\pm1}$ and by the relation $XY = q YX$. 

\begin{cor}
\label{cor:StructureBalancedCheFock}
For $q=\omega^4$, the balanced Chekhov-Fock algebra $\ZZ$ is isomorphic to
$$
\mathcal W^q_1 \otimes \mathcal W^q_2 \otimes \dots \otimes \mathcal W^q_{g} \otimes 
\mathcal W^{q^2}_{g+1} \otimes \mathcal W^{q^2}_{g+2} \otimes \dots \otimes \mathcal W^{q^2}_{3g+s-3}  \otimes \C[H_1] \otimes  \C[H_2] \otimes \dots \otimes \C[H_s]
$$
where each $\mathcal W^q_i$ is a copy of the quantum $2$--torus $\mathcal W^q$, each $\mathcal W^{q^2}_j$ is a copy of  $\mathcal W^{q^2}$, and each $ \C[H_k] $ is a polynomial algebra in the variable $H_k$. 

In addition, the $s$ central generators $H_k=Z_{\eta_k}$ are associated to the punctures of $S$ as in Lemma~{\upshape\ref{lem:StructureThurston}}. \qed
\end{cor}

\subsection{Representations of the balanced Chekhov-Fock algebra}
\label{sect:RepBalancedCF}

The algebraic structure of the balanced  Chekhov-Fock algebra $\ZZ$ determined in Corollary~\ref{cor:StructureBalancedCheFock} is relatively simple. This makes it easy to classify its irreducible finite-dimensional representations. 

As usual, we assume that $A=\omega^{-2}$ is a primitive $N$--root of $-1$, with $N$ odd. 

\begin{prop}
\label{prop:InvariantsRepsCheFock}
Let $\mu\col \ZZ \to \End(E)$ be an irreducible finite-dimensional representation of $\ZZ$. There exists a   map $\zeta_\mu\col \mathcal W(\tau_\lambda; \Z) \to \C^*$ and  numbers $h_k \in \C^*$, with $k=1$, \dots, $s$, associated to the punctures of the surface $S$ such that:
\begin{enumerate}
\item $\mu(Z_\alpha^N) = \zeta_\mu(\alpha) \,\Id_E$ for every edge weight system $\alpha \in \mathcal W(\tau_\lambda; \Z)$ with associated monomial $Z_\alpha \in \ZZ$; 
\item $\zeta_\mu(\alpha + \beta) = (-1)^{\Omega(\alpha, \beta)} \zeta_\mu(\alpha) \zeta_\mu(\beta)$ for every $\alpha$, $\beta \in \mathcal W(\tau_\lambda; \Z)$, where $\Omega$ is the Thurston intersection form;
\item $\mu(H_k) = h_k\,\Id_E$ for the central element $H_k =Z_{\eta_k}\in \ZZ$ associated to the $k$--th puncture of $S$ as in Corollary~{\upshape\ref{cor:StructureBalancedCheFock}}; 
\item $\zeta_\mu(\eta_k) = h_k^N$ for the weight system $\eta_k\in \mathcal W(\tau_\lambda; \Z)$ associated to the $k$--th puncture of $S$ as in Lemma~{\upshape\ref{lem:StructureThurston}}.
\end{enumerate}
\end{prop}

\begin{proof}
For every $\alpha \in \mathcal W(\tau_\lambda; \Z)$, Lemma~\ref{lem:SqRootCheFockThurston} shows that the element $Z_\alpha^N = Z_{N\alpha}$ is central in $\ZZ$. In particular, if $\mu\col \ZZ \to \End(E)$ is an irreducible finite-dimensional representation of $\ZZ$, there is a number $\zeta_\mu(\alpha)\in \C^*$ such that
$
\mu(Z_\alpha^N) = \zeta_\mu(\alpha) \,\Id_E
$.
In addition, Lemma~\ref{lem:SqRootCheFockThurston} shows that $Z_\alpha^N Z_\beta^N = \omega^{2N^2\Omega(\alpha, \beta)}Z_{\alpha+\beta}^N= (-1)^{\Omega(\alpha, \beta)}Z_{\alpha+\beta}^N$, so that the  map $\zeta_\mu\col \mathcal W(\tau_\lambda; \Z) \to \C^*$ satisfies Property~(2). 

Similarly, Corollary~\ref{cor:StructureBalancedCheFock} shows that each $H_k$ is central in $\ZZ$, so that $\mu(H_k) = h_k \, \Id$ for some $h_k \in \C^*$. Then $h_k^N \,\Id_E = \mu(H_k^N) = \mu (Z_{\eta_k}^N) = \zeta_\mu(\eta_k)\, \Id_E$ since $H_k = Z_{\eta_k}$, so that $ \zeta_\mu(\eta_k) = h_k^N$. 
\end{proof}

A map $\zeta\col \mathcal W(\tau_\lambda; \Z) \to \C^*$ satisfying Condition~(2) of Proposition~\ref{prop:InvariantsRepsCheFock} is a \emph{twisted homomorphism} twisted by the Thurston form  $\Omega$, or more precisely twisted by the symmetric map $(\alpha, \beta) \mapsto (-1)^{\Omega(\alpha, \beta)}$. This notion will probably look less intimidating once one realizes that   a twisted homomorphism is completely determined  by the assignment of a non-zero complex number to each of the $n$ generators of the group $ \mathcal W(\tau_\lambda; \Z) \cong \Z^n$.

\begin{prop}
\label{prop:RepsSquareRootCheFock}
Suppose that we are given a twisted homomorphism  $\zeta\col \mathcal W(\tau_\lambda; \Z) \to \C^*$ twisted by the Thurston form $\Omega$  and, for each of the punctures of $S$, a number $h_k\in \C^*$ such that $h_k^N = \zeta(\eta_k)$. Then, up to isomorphism, there exists a unique  irreducible finite-dimensional representation $\mu\col \ZZ \to \End(E)$ such that
\begin{enumerate}
\item $\zeta_\mu = \zeta$, namely $\mu(Z_\alpha^N) = \zeta(\alpha) \,\Id_E$ for every $\alpha \in \mathcal W(\tau_\lambda; \Z)$;
\item $\mu(H_k) = h_k\,\Id_E$ for $k=1$, \dots, $s$.
\end{enumerate}

In addition, for such a representation, the vector space $E$ has dimension $N^{3g+s-3}$. 
\end{prop}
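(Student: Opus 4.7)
The plan is to reduce the classification to the algebraic decomposition already provided by Corollary~\ref{cor:StructureSquareRootCheFock} and then invoke the classical representation theory of the quantum $2$--torus at a root of unity. Explicitly, I would identify
$$
\ZZ \cong \bigotimes_{i=1}^{g} \mathcal W^q_i \otimes \bigotimes_{j=1}^{2g+s-3} \mathcal W^{q^2}_j \otimes \bigotimes_{k=1}^{s} \C[H_k]
$$
with $q=\omega^4$ a primitive $N$--root of unity. Since $N$ is odd, $\gcd(2,N)=1$, and therefore $q^2$ is also a primitive $N$--root of unity, so all $3g+s-3$ quantum torus factors are handled uniformly. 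Over the algebraically closed field $\C$, any finite-dimensional irreducible representation of a finite tensor product of algebras splits as an external tensor product of irreducibles of the factors (a Schur's-lemma argument applied factor by factor), so it suffices to analyze each piece.

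For a quantum $2$--torus $\mathcal W^\xi$ with $\xi$ a primitive $N$--root of unity, the standard argument says that $X^N$ and $Y^N$ are central and in any irreducible representation they act as scalars $x,y\in \C^*$; conversely, choosing $N$--th roots $x_0,y_0$, the representation on $\C^N$ with basis $e_0,\dots,e_{N-1}$ defined by $Xe_i = x_0 \xi^i e_i$ and $Ye_i = y_0 e_{i+1 \bmod N}$ is irreducible of dimension $N$, and any other irreducible representation with the same central characters is isomorphic to it (any irrep contains an $X$--eigenvector, from which $Y$ generates a cyclic structure of length exactly $N$). For each polynomial algebra $\C[H_k]$, irreducibles are one-dimensional, classified by $h_k\in \C$; since $H_k = Z_{\eta_k}$ has inverse $Z_{-\eta_k}$ in $\ZZ$, one must actually have $h_k\in\C^*$. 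Tensoring these produces an irreducible representation of dimension $N^g \cdot N^{2g+s-3}\cdot 1^s = N^{3g+s-3}$, as claimed.

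It remains to match the parametrizing data to the central characters on the tensor factors. Pick a basis of $\mathcal W(\tau_\lambda;\Z)$ realizing the normal form of Lemma~\ref{lem:StructureThurston}: $g$ hyperbolic pairs for the $\mathcal W^q_i$, $2g+s-3$ hyperbolic pairs for the $\mathcal W^{q^2}_j$, and the $s$ kernel vectors $\eta_k$. A group homomorphism $\zeta\col\mathcal W(\tau_\lambda;\Z)\to\C^*$ is freely determined by its values on this basis; its values on the first $2g+2(2g+s-3)$ basis vectors record precisely the pairs of central scalars $\rho(X_i^N)=x_i$, $\rho(Y_i^N)=y_i$ on the quantum torus factors, while its values on the $\eta_k$ equal $\rho(H_k^N)=h_k^N$, which is exactly the compatibility condition $h_k^N=\zeta(\eta_k)$. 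Hence the data $(\zeta,h_1,\dots,h_s)$ parametrizes exactly the tuples of central characters plus scalars on the polynomial factors, and both existence and uniqueness up to isomorphism follow from the corresponding statements on each factor.

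The main obstacle I anticipate is bookkeeping rather than conceptual: one must carefully align the abstract symplectic basis produced by Lemma~\ref{lem:StructureThurston} with the explicit generators of the tensor factors appearing in Corollary~\ref{cor:StructureSquareRootCheFock}, and verify that, under this identification, the scalar $\rho(Z_\alpha^N)$ is precisely the product of central-character scalars dictated by expressing $\alpha$ in the basis. Once this verification is carried out, the quantum-torus classification above and the one-dimensionality of irreducibles of the polynomial factors immediately yield both the existence and uniqueness, and the stated dimension $N^{3g+s-3}$.
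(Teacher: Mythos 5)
Your argument follows the same route as the paper's: invoke the decomposition of Corollary~\ref{cor:StructureSquareRootCheFock}, use that $N$ odd makes both $q$ and $q^2$ primitive $N$--roots of unity, analyze irreducible representations factor by factor via the standard cyclic construction for the quantum $2$--torus at a root of unity, and match the central characters to the data $(\zeta,h_1,\dots,h_s)$. The paper gives the same explicit $\rho_i(X_i)$ and $\rho_i(Y_i)$ formulas you describe and, like you, defers the irreducibility/uniqueness bookkeeping to elementary linear algebra (citing \cite{BonLiu}), so this is essentially the intended proof.
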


\begin{proof}
Using elementary linear algebra, this is an immediate consequence of Corollary~\ref{cor:StructureBalancedCheFock}.
More precisely, consider the isomorphism
$$
\ZZ \cong\mathcal W^q_1 \otimes \dots \otimes \mathcal W^q_{g} \otimes 
\mathcal W^{q^2}_{g+1} \otimes \dots \otimes \mathcal W^{q^2}_{3g+s-3}  \otimes \C[H_1] \otimes  \dots \otimes \C[H_s]
$$
provided by Corollary~\ref{cor:StructureBalancedCheFock}. 

For $1\leq i\leq 3g+s-3$, let $X_i^{\pm1}$ and $Y_i^{\pm1}$ denote the generators of $\mathcal W^q_i$ or $\mathcal W^{q^2}_i$  (satisfying the relation $X_iY_i = qY_iX_i$ if $1\leq i\leq g$ and $X_iY_i = q^2Y_iX_i$ if $g< i\leq 3g+s-3$). The proof of Corollary~\ref{cor:StructureBalancedCheFock} shows that  these generators are of the form $X_i=Z_{\alpha_i}$, $Y_i = Z_{\beta_i}$ and $H_k = Z_{\eta_k}$ for some edge weight systems $\alpha_i$, $\beta_i$, $\eta_k  \in \mathcal W(\tau_\lambda; \Z)$. In addition, the $\alpha_i$, $\beta_i$ and  $\eta_k$ form a basis for $ \mathcal W(\tau_\lambda; \Z)\cong \Z^n$.

Because $N$ is odd, $q=\omega^4$ and $q^2$ are both primitive $N$--root of unity.  Arbitrarily pick $N$--roots $\zeta(\alpha_i)^{\frac1N}$ and $\zeta(\beta_i)^{\frac1N}$, and  define $\mu_i\col \mathcal W^q_i \to \End(E_i)$ by the property that, if $v_1$, $v_2$, \dots, $v_N$ form a basis for $E_i \cong \C^N$,
\begin{itemize}
\item[] $\mu_i(X_i) (v_j) = -\zeta(\alpha_i)^{\frac1N} q^j v_j$ and $\mu_i(Y_i) (v_j) = \zeta(\beta_i)^{\frac1N} v_{j+1}$
if $1\leq i \leq g$, and
\item[] 
$\mu_i(X_i) (v_j) = \zeta(\alpha_i)^{\frac1N} q^{2j} v_j$  and $ \mu_i(Y_i) (v_j) = \zeta(\beta_i)^{\frac1N} v_{j+1}$
if $g< i\leq 3g+s-3$. 
\end{itemize}

Then, for $E = E_1 \otimes E_2\otimes \dots \otimes E_{3g+s-3}$, define $\mu \col \ZZ \to \End(E)$ by the property that $\mu$ coincides with $\mu_1 \otimes \mu_2\otimes \dots \otimes \mu_{3g+s-3}$ on $ \mathcal W^q_1 \otimes \mathcal W^q_2 \otimes \dots \otimes \mathcal W^q_{3g+s-3} $, and  $\mu(H_k) = h_k\,\Id_E$ for every $k=1$, \dots, $s$. 

It is immediate that $\mu$ satisfies the required properties. The fact that $\mu$ is irreducible, and that every irreducible representation is isomorphic to $\mu$, is easily proved by elementary linear algebra; see for instance \cite[\S 4]{BonLiu} for details. 
\end{proof}

\section{Pleated surfaces and homomorphisms to $\SL(\C)$}
\label{sect:ShearCoord}

Let us consider the special case of Proposition~\ref{prop:RepsSquareRootCheFock} when $N=1$. In particular, $A=-1$ and $\iota = \omega = \pm \mathrm i$. Since the Chebyshev polynomial $T_1(x)$ is equal to $x$, the choice of puncture invariants $h_k$ is irrelevant and  Proposition~\ref{prop:RepsSquareRootCheFock} associates to any twisted homomorphism $\zeta\col \mathcal W(\tau_\lambda; \Z) \to \C^*$ a representation $\mu_\zeta\col \ZZZ \to \End(\C)$. By composition with the quantum trace homomorphism $\Tr_\lambda^\iota \col \SSSS \to \ZZZ $ of  Theorem~\ref{thm:QTrace}, we now have a homomorphism
$$
\rho_\zeta = \mu_\zeta \circ \Tr_\lambda^\iota \col \SSSS \to \End(\C) = \C.
$$
We can then apply the case $N=1$ of Theorem~\ref{thm:InvariantsExist} (which actually is an observation of Doug Bullock, Charlie Frohman, Joanna Kania-Bartoszy\'nska, Jozef Przytycki and Adam Sikora \cite{Bull1, Bull2, BFK1, BFK2, PrzS} and plays a crucial r\^ole in the proof of Theorem~\ref{thm:InvariantsExist}  in its full generality). It provides a character $r_\zeta\in\RR$ such that
$$
\rho_\zeta \bigl([K]\bigr) = - \Tr\, r_\zeta(K)
$$
for every framed knot $K \subset S\times[0,1]$. The property is valid for all knots, not just those whose projection to $S$ has no double point  \cite{Bull1, Bull2, BFK1, BFK2, PrzS}.

It is natural to ask which elements of $\RR$ are obtained in this way. The answer involves the following geometric definition. 

Let $\widetilde S$ be the universal cover of $S$, and let $\widetilde\lambda$ be the ideal triangulation of $\widetilde S$ obtained by lifting the edges and faces of $\lambda$. Identify $\PSL(\C)$ to the isometry group of the hyperbolic 3--space $\HH^3$. A \emph{pleated surface} with \emph{pleating locus} $\lambda$ is the data $(\widetilde f, \bar r)$ of a map $\widetilde f \col \widetilde S \to \HH^3$ and a group homomorphism $\bar r\col \pi_1(S) \to \PSL(\C)$ such that:
\begin{enumerate}
\item $\widetilde f$ homeomorphically sends each edge of $\widetilde \lambda$ to a complete geodesic of the hyperbolic space $\HH^3$, and every face of $\widetilde \lambda$ to a totally geodesic ideal triangle of $\HH^3$, with vertices on the sphere at infinity $\partial_\infty \HH^3$;
\item $\widetilde f$ is $\bar r$--equivariant, in the sense that $\widetilde f( \gamma \widetilde x) = \bar r(\gamma) \bigl( \widetilde f(\widetilde x) \bigr)$ for every $\gamma \in \pi_1(S)$ and every $\widetilde x \in \widetilde S$. 
\end{enumerate}

Following the terminology introduced in \cite{Thu}, we say that the group homomorphism $\bar r\col \pi_1(S) \to \PSL(\C)$ \emph{realizes} the ideal triangulation $\lambda$ if there exists a pleated surface $(\widetilde f, \bar r)$ with pleating locus $\lambda$. By extension, a point in the character variety $\mathcal R_{\PSL(\C)}(S)$ \emph{realizes} $\lambda$ if it can be represented by a homomorphism $\bar r\col \pi_1(S) \to \PSL(\C)$ realizing $\lambda$. Finally,  a  character in $\RR$  \emph{realizes} $\lambda$ if it is sent to a point of $\mathcal R_{\PSL(\C)}(S)$ realizing $\lambda$ by  the canonical projection $\RR\to \mathcal R_{\PSL(\C)}(S)$ 

We are now ready to state the result promised. At the beginning of this section, we associated a character $r_\zeta \in \RR$ to each  twisted homomorphism $\zeta\col \mathcal W(\tau_\lambda; \Z) \to \C^*$. 

\begin{prop}
\label{prop:RealizeIdealTriang}
A  character  $r\in\RR$ is associated to a twisted homomorphism $\zeta\col \mathcal W(\tau_\lambda; \Z) \to \C^*$ as above if and only it realizes the ideal triangulation $\lambda$. 
\end{prop}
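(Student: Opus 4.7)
Since $\iota^4=1$, Corollary~\ref{cor:SquareRootChefockCommutativeIota} makes $\ZZZ$ commutative, and the one-dimensional representation $\rho_\zeta\colon \ZZZ\to \C$ is literally the character $Z_\alpha\mapsto \zeta(\alpha)$. The composition $\rho_\zeta\circ \Tr_\lambda^\iota$ is thus a character of $\SSSS$, which the $N=1$ case of Theorem~\ref{thm:InvariantsExist} converts into the point $r_\zeta\in\RR$ or $\RS$. The plan is to identify $r_\zeta$ geometrically.

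The bridge is the classical content of the quantum trace constructed in \cite{BonWon1}: when specialized at $\iota$ with $\iota^4=1$, the element $\Tr_\lambda^\iota\bigl([K]\bigr)\in\ZZZ$ is a Laurent polynomial in formal square roots $\sqrt{x_i}$ of shear-bend coordinates that, when evaluated at the shear-bend data of a pleated surface with holonomy $\bar r$ and an appropriate choice of square roots, computes the signed classical trace $-\Tr\, r(K)$ of a lift of $\bar r$ to $\RR$ or $\RS$. Accordingly, given $\zeta$, I will set $x_i = \zeta(\alpha_i^{(2)})\in\C^*$, where $\alpha_i^{(2)}\in\mathcal W(\tau_\lambda;\Z)$ is the balanced weight system carrying weight~$2$ at the $i$-th switch and $0$ elsewhere. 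Thurston's parametrization (\cite{Thu, Bon97}) then produces a pleated surface with pleating locus $\lambda$ and these shear-bend coordinates, yielding a class $[\bar r_\zeta]\in\mathcal R_{\PSL(\C)}(S)$ realizing $\lambda$. The remaining values of $\zeta$ on $\mathcal W(\tau_\lambda;\Z)$ package a coherent family of square roots of the $x_i$ that fix the sign of each $\Tr\, r_\zeta(K)$ in the \cite{BonWon1} formula, and thereby select a specific lift of $\bar r_\zeta$ to $\RR$ (if $\epsilon=-1$) or to $\RS$ (if $\epsilon=+1$); Theorem~\ref{thm:InvariantsExist}(1) forces this lift to coincide with $r_\zeta$, which therefore realizes $\lambda$.

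Conversely, given $r\in\RR$ or $\RS$ whose image $\bar r\in\mathcal R_{\PSL(\C)}(S)$ realizes $\lambda$, the associated pleated surface provides shear-bend coordinates $x_i\in\C^*$. Since $\mathcal W(\tau_\lambda;\Z)\cong\Z^n$ is free abelian (Lemma~\ref{lem:StructureThurston}), I define $\zeta$ on a basis by choosing square roots of the $x_i$ dictated by the spin or sign data of $r$, and verify via the classical trace formula of \cite{BonWon1} that $r_\zeta=r$. The main obstacle, which I expect will occupy most of the proof, is the bookkeeping of these square-root choices: one must check that the values of $\zeta$ on all of $\mathcal W(\tau_\lambda;\Z)$—including on the puncture classes $\eta_k$—encode precisely the $H^1(S;\Z_2)$-worth of ambiguity distinguishing lifts of $\bar r$ to $\RR$ or $\RS$, and that every such ambiguity arises from some $\zeta$. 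This reduces via Lemma~\ref{lem:StructureThurston} to a finite linear-algebra check, driven by the classical limit of the quantum trace from \cite{BonWon1} and the compatibility of the framing/spin monodromy $\sigma(K)$ with the sign patterns produced by the $\sqrt{x_i}$.
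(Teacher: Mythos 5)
Your strategy is the same as the paper's: in the forward direction, recover shear-bend coordinates via $x_i = \zeta(\alpha_i)$ with $\alpha_i$ the balanced weight system carrying $2$ on the $i$-th edge, build the pleated surface, and use the classical content of the quantum trace from \cite{BonWon1} to identify the projected monodromy; in the converse direction, choose square roots of the shear-bend coordinates of a pleated surface for $r$ and package them as a homomorphism $\zeta\colon\mathcal W(\tau_\lambda;\Z)\to\C^*$. The forward half of your argument is complete and matches the paper.

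Where you stop short is the converse. You describe the remaining task — showing that the square-root ambiguity in $\zeta$ accounts for exactly the $H^1(S;\Z_2)$-worth of ambiguity in lifting $\bar r$ — as ``a finite linear-algebra check'' that ``will occupy most of the proof,'' but you don't carry it out, and you phrase the construction as choosing square roots ``dictated by the spin or sign data of $r$,'' which presupposes the very correspondence you need to verify. The paper handles this more cleanly and explicitly: first choose \emph{arbitrary} square roots $z_i=\sqrt{x_i}$, getting some $r_\zeta$ with the same $\PSL(\C)$-projection as $r$; then $r$ and $r_\zeta$ differ by a class $o\in H^1(S;\Z_2)$, and one corrects by setting $\zeta'(\alpha) = (-1)^{o([\alpha])}\zeta(\alpha)$, where $[\alpha]\in H_1(S;\Z_2)$ is the mod-$2$ homology class represented by $\tau_\lambda$ with multiplicities $\alpha$. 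The point you should make explicit is that twisting $\zeta$ by $(-1)^{o([\cdot])}$ changes $r_\zeta$ by exactly the action of $o$ on the character variety — this is the device that makes the sign bookkeeping tractable, and without stating it your converse direction remains a sketch rather than a proof.
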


\begin{proof} Suppose that $r\in\RR$ realizes the ideal triangulation $\lambda$. By definition, there exists a pleated surface $(\widetilde f, \bar r)$ with pleating locus $\lambda$, where the homomorphism $\bar r\col \pi_1(S) \to \PSL(\C)$ represents the image of $r$ under the projection $\RR\to \mathcal R_{\PSL(\C)}(S)$. 

The pleated surface $(\widetilde f, \bar r)$ determines, for each edge $\widetilde e_i$ of the ideal triangulation $\widetilde \lambda$ of $\widetilde S$, a complex weight $\widetilde x_i\in \C^*$ defined as follows: If $\widetilde Q_i \subset \widetilde S$ is the quadrilateral formed by the two faces of $\widetilde \lambda$ meeting along the edge $\widetilde e_i$, then $-\widetilde x_i$ is the cross-ratio of the 4 vertices of $\widetilde f(\widetilde Q_i)$ in the sphere at infinity $\C \cup \{ \infty \}$ of $\HH^3$. These edge weights $\widetilde x_i$ are equivariant under the action of $\pi_1(S)$, and therefore descend to a system of weights $x_i$ for the edges $e_i$ of $\lambda$.  The edge weights $x_i \in \C^*$ are the \emph{shear-bend parameters} of the pleated surface $(\widetilde f, \bar r)$. 

Choose square roots $z_i = \sqrt{x_i}$. Then, for every closed curve $K$ in $S$, there is an explicit formula that expresses the trace $\Tr\, \bar r(K)$ as a Laurent polynomial in the $z_i$; see for instance \cite[\S\S 1.3--1.4]{BonWon1}. Note that there necessarily is a sign ambiguity in this formula, as the trace of an element of $\PSL(\C)$ is only defined up to sign. Another sign ambiguity occurs in the choice of the square roots $z_i = \sqrt{x_i}$.

We will use these edge weights $z_i \in \C^*$  to construct representations of $\ZZZ$ and $\SSSS$. Recall that a twisted homomorphism $\zeta\col \mathcal W(\tau_\lambda; \Z) \to \C^*$ is equivalent to the data of its value on a set of generators of  $\mathcal W(\tau_\lambda; \Z) \cong Z^n$. We can therefore find such a twisted homomorphism so that
$$\zeta( \alpha ) = \pm  z_1^{\alpha_1}z_2^{\alpha_2}\dots z_n^{\alpha_n}$$
for every edge weight system $\alpha \in  \mathcal W(\tau_\lambda; \Z)$ assigning weight $\alpha_i \in \Z$ to the edge $e_i$ of $\lambda$. The $\pm$ signs are here required by the twisting. 
In addition, a simple manipulation of the formula for the Thurston intersection form (or a use of Lemma~\ref{lem:SqRootCheFockThurston}) show that $\Omega(\alpha, \beta)$ is even  whenever $\alpha \in  (2\Z)^n \subset \mathcal W(\tau_\lambda; \Z)$ assigns even weights $\alpha_i \in \Z$ to all edges of $\lambda$; in particular, there is no twisting on $(2\Z)^n \subset \mathcal W(\tau_\lambda; \Z)$. Using Lemma~\ref{lem:StructureWeightSystems}, we can therefore arrange that
$$\zeta( \alpha ) = +  z_1^{\alpha_1}z_2^{\alpha_2}\dots z_n^{\alpha_n}$$
for every $\alpha \in  (2\Z)^n $. 

Note that there are several possible choices for $\zeta$, coming from the signs $\pm$. In fact, Lemma~\ref{lem:StructureWeightSystems} shows that there are exactly $2^d$ possibilities for $\zeta$, where $d$ is the dimension of $H_1(S;\Z_2)$. We will later adjust the choice of $\zeta$ so that it fits our purposes. 

Let $ \mu_\zeta \colon \ZZZ \to \End(\C)= \C$ be the representation of $\ZZZ$ associated by Proposition~\ref{prop:RepsSquareRootCheFock} to the twisted homomorphism $\zeta$. Namely, $ \mu(Z_\alpha)= \zeta(\alpha)$ for every $\alpha \in  \mathcal W(\tau_\lambda; \Z)$

The definition of  the quantum trace homomorphism $\Tr_\lambda^\iota \col \SSSS \to \ZZZ $ in \cite{BonWon1} was specially designed to copy the formula expressing the trace $\Tr\, \bar r(K)$ as a Laurent polynomial in the square roots $z_i=\sqrt{x_i}$ of the shear-bend parameters of the pleated surface  $(\widetilde f, \bar r)$. In particular, because of the key property that $\mu_\zeta(Z_i^2)=+z_i^2$, 
$$
\mu_\zeta \circ \Tr_\lambda^\iota \bigl([K] \bigr) = \pm \Tr \,  r(K)
$$
for every framed knot $K\subset S \times [0,1]$, where the sign $\pm$ depends on $K$ and on the choice of the square roots  $z_i=\sqrt{x_i}$; see the discussion in \cite[\S\S 1.3--1.4]{BonWon1}.

As discussed at the beginning of this section, the homomorphism $ \mu_\zeta \circ \Tr_\lambda^\iota \col \SSSS \to  \C$ also defines a character $r_\zeta \in \RR$ such that 
$$
\mu_\zeta \circ \Tr_\lambda^\iota \bigl([K]\bigr) = - \Tr\, r_\zeta(K)
$$
for every framed knot $K \subset S\times[0,1]$. As a consequence, $ \Tr\, r_\zeta(K) = \pm  \Tr\, r(K)$ for every knot $K$. 

At this point, there is no reason for the two characters $r$ and $r_\zeta\in \RR$ to coincide. However, by construction, they project to the same $\PSL(\C)$--valued character in $\mathcal R_{\PSL(\C)}(S)$. Their difference can therefore be encoded by  a cohomology class $\epsilon \in H^1(S; \Z_2)$, such that 
$$
\Tr\, r(K) =(-1)^{ \epsilon (K)} \Tr\, r_\zeta(K)
$$
for every knot $K\subset S \times [0,1]$. 

Each edge $e_i$ of the ideal triangulation $\lambda$ is Poincar\'e dual to a cohomology class $\epsilon_i \in H^1(S; \Z_2)$. Replacing the square root $z_i = \sqrt{x_i}$ by the other square root $-z_i$ has the effect of replacing $r_\zeta$ with $\epsilon_i r_\zeta$. Since the $\epsilon_i $ generate $ H^1(S; \Z_2)$, we can therefore adjust the choice of the square roots $z_i = \sqrt{x_i}$ so that  the characters $r$ and $r_\zeta\in \RR$ are now equal. 

This proves that, if the character $r\in \RR$ realizes the ideal triangulation $\lambda$, there exists a twisted homomorphism $\zeta\col \mathcal W(\tau_\lambda; \Z) \to \C^*$ whose associated character $r_\zeta \in \RR$ is equal to $r$. 

Conversely, suppose that $r=r_\zeta \in \RR$ is associated to a twisted homomorphism $\zeta\col \mathcal W(\tau_\lambda; \Z) \to \C^*$ as above. More precisely, consider the corresponding representation $\mu_\zeta\col \ZZZ \to \End(\C)= \C$, defined by the property that $\mu_\zeta(Z_\alpha)=\zeta(\alpha)$ for every $\alpha \in \mathcal W(\tau_\lambda; \Z) $. Then 
$$
 \mu_\zeta \circ \Tr_\lambda^\iota  \bigl([K]\bigr) = - \Tr\, r(K)
$$
for every framed knot $K \subset S\times[0,1]$

The generator $Z_i \in \TTT$ associated to the edge $e_i$ of $\lambda$ does not satisfy the exponent parity condition defining the balanced Chekhov-Fock algebra $\ZZZ$, but its square does. We can therefore consider $x_i = \mu_\zeta(Z_i^2) \in \C$, which is different from 0 since $Z_i^2$ is invertible. 

We can then construct a pleated surface $(\widetilde f, \bar r)$ whose pleating locus is equal to $\lambda$ and whose shear-bend parameters are equal to the edge weights $x_i$.  In particular, this pleated surface is equivariant with respect to a homomorphism $\bar r\colon \pi_1(S) \to \PSL(\C)$, which defines a character $\bar r \in \mathcal R_{\PSL(\C)}(S)$. 

By our discussion of the geometric interpretation of the trace homomorphism $\Tr_\lambda^\iota  $, the character $\bar r \in \mathcal R_{\PSL(\C)}(S)$ is the projection of $r\in \RR$. In particular, $r$ realizes the ideal triangulation~$\lambda$. 

This concludes the proof of Proposition~\ref{prop:RealizeIdealTriang}. 
\end{proof}

\section{Representations of the skein algebra}

We are now ready to prove Theorem~\ref{thm:RealizeInvariantsIntro}. 

We begin with an elementary lemma about the Chebyshev polynomials $T_n$. Remember that the polynomial $T_n$ is defined by the property that 
$\Tr\, M^n = T_n (\Tr\,M)$ for every $M\in \SL(\C)$. Applying this to a rotation matrix gives the trigonometric interpretation that $\cos n\theta = \frac 12 T_n (2\cos \theta)$.

\begin{lem}
\label{lem:ChebElementaryProp}
$ $
\begin{enumerate}
\item If $x=a+a^{-1}$, then $T_n(x) = a^n + a^{-n}$;
\item If $y=b+b^{-1}$, the set of solutions to the equation $T_n(x)=y$ consists of the numbers $x=a+a^{-1}$ as $a$ ranges over all $n$--roots of $b$. 
\end{enumerate}
\end{lem}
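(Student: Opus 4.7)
\medskip

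\noindent\textbf{Proof plan.}

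For part~(1), the plan is to recognize both sides as Laurent polynomials in~$a$ and compare them on a Zariski-dense set. Concretely, let $M_a\in\SL(\C)$ be the diagonal matrix with entries $a$ and $a^{-1}$. Then $\Tr M_a = a+a^{-1}$ and $\Tr M_a^n=a^n+a^{-n}$, so the defining identity $\Tr M^n = T_n(\Tr M)$ immediately gives
\[
T_n(a+a^{-1}) = a^n + a^{-n}
\]
for every $a\in\C^*$. Since both sides are Laurent polynomials in~$a$ that agree as functions on $\C^*$, they agree identically, which is the content of~(1). The edge cases $a=\pm1$ where the matrix need not be diagonal therefore require no separate argument.

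For part~(2), the idea is to reduce the equation $T_n(x)=y$ to an equation in a new unknown~$a$ related to $x$ by $x=a+a^{-1}$. Any $x\in\C$ can be written in this form by solving the quadratic $a^2-xa+1=0$, whose roots $a$ and $a^{-1}$ both give the same $x$. Using~(1), the equation $T_n(x)=y=b+b^{-1}$ becomes
\[
a^n + a^{-n} = b + b^{-1},
\]
which I would rearrange as $(a^n-b)(a^n-b^{-1})=0$, so that $a^n=b$ or $a^n=b^{-1}$. In the second case $a^{-1}$ is an $n$-th root of~$b$, and replacing $a$ by $a^{-1}$ does not alter $x=a+a^{-1}$. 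Hence every solution $x$ is of the form $x=a+a^{-1}$ for some $n$-th root $a$ of~$b$, and conversely every such $a$ produces a solution by part~(1).

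To finish I would remark that this description is consistent with the degree count: $T_n$ has degree~$n$, so the equation $T_n(x)=y$ has at most $n$ solutions, matching the at most $n$ values of $a+a^{-1}$ obtained as $a$ ranges over the $n$ roots of~$b$ (some values coalescing precisely when $b^2=1$, which is the expected degeneracy $y=\pm2$ where $T_n(x)\mp2$ acquires double roots). There is no real obstacle in this argument; the only point that deserves care is the observation that $a$ and $a^{-1}$ give the same $x$, which is exactly what makes the answer symmetric under $b\leftrightarrow b^{-1}$ and lets one phrase the solution set using the $n$ roots of $b$ alone.
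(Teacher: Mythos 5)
Your proof is correct and follows the same approach the paper uses: evaluate the defining identity $\Tr\,M^n = T_n(\Tr\,M)$ on a matrix with spectrum $\{a,a^{-1}\}$ to get (1), then rearrange $a^n+a^{-n}=b+b^{-1}$ for (2). The paper's version is terser (it simply says part (2) "immediately follows"), but the underlying reasoning is identical to yours.
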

\begin{proof}
For a matrix $M\in \SL(\C)$, the data of its trace $x$ is equivalent to the data of its spectrum $\{a, a^{-1}\}$. The first property is then a straightfoward consequence of the fact that $\Tr\, M^n = T_n (\Tr\,M)$. The second property immediately follows. 
\end{proof}

We will also need the following quantum trace computation, connecting the skein $[P_k] \in \SSS$ and the central element $H_k\in \ZZ$ that are both associated to the $k$--th puncture of $S$. 

\begin{lem}
\label{lem:QuantumTracePunctureLoop}
For the quantum trace homomorphism $\Tr_\lambda^\omega \colon \SSS \to \ZZ$, 
$$
\Tr_\lambda^\omega \bigl( [P_k] \bigr) = H_k + H_k^{-1}.
$$
\end{lem}

\begin{proof}
Let $e_{i_1}$, $e_{i_2}$, \dots, $e_{i_u}$ be the edges of $\lambda$ that lead to the $k$--th puncture, indexed in counterclockwise order around the puncture; in particular, the $e_{i_j}$ are not necessarily distinct. 

The construction of $\Tr_\lambda^\omega \bigl( [P_k] \bigr)$ in \cite{BonWon1} requires a careful control of elevations (namely $[0,1]$--coordinates) along the knot $P_k \subset S \times [0,1]$. Choose this knot so that it steadily goes up from $e_{i_1}$ to $e_{i_u}$, and then sharply goes down to return to its starting point in $e_{i_1}$. In this setup, the formula of \cite{BonWon1} gives that 
$$
\Tr_\lambda^\omega \bigl( [P_k] \bigr) = 
\omega^{-u+2} Z_{i_1} Z_{i_2} \dots Z_{i_u}
+ \omega^{-u+2} Z_{i_1}^{-1} Z_{i_2}^{-1} \dots Z_{i_u}^{-1}
.
$$
This is relatively straightforward when only one end of the edge $e_{i_1}$ leads to the $k$--th puncture, namely when the projection of $P_k$ to $S$ crosses $e_{i_1}$ only once, but otherwise requires the consideration corrections term in a bigon neighborhood of $e_{i_1}$, of the type given by \cite[Lemma~22]{BonWon1}. Fortunately, these correction terms turn out to be trivial in this case. 

We need to connect this formula to $H_k = [Z_{i_1} Z_{i_2} \dots Z_{i_u}]$. Computing the Weyl quantum ordering is again straightforward when each edge $e_{i_j}$ has only one end leading to the $k$--th puncture. For the general case, we could use a brute force computation as in \cite[Lemma~12]{BonWon4}. We prefer to give here a more indirect argument, based on the invariance of $\Tr_\lambda^\omega \bigl( [P_k] \bigr)$ under isotopy of $P_k$. 

For this, choose the elevation of $P_k$ so that it now goes \emph{down}  from $e_{i_1}$ to $e_{i_u}$, and then  goes up near $e_{i_1}$ to return to its starting point. The formulas of \cite{BonWon1}  give in this setup 
$$
\Tr_\lambda^\omega \bigl( [P_k] \bigr) = 
\omega^{u-2} Z_{i_u} Z_{i_{u-1}} \dots Z_{i_1}
+ \omega^{u-2} Z_{i_u}^{-1} Z_{i_{u-1}}^{-1} \dots Z_{i_1}^{-1}.
$$
Comparing the two expressions for $\Tr_\lambda^\omega \bigl( [P_k] \bigr)\in \ZZ$ shows  in particular that
$$
\omega^{-u+2} Z_{i_1} Z_{i_2} \dots Z_{i_u}
=
\omega^{u-2} Z_{i_u} Z_{i_{u-1}} \dots Z_{i_1}.
$$

By definition of the Weyl quantum ordering, there exists an integer $\alpha \in \Z$ such that 
$$
H_k = \omega^\alpha Z_{i_1} Z_{i_2} \dots Z_{i_u} = \omega^{-\alpha} Z_{i_u} Z_{i_{u-1}} \dots Z_{i_1}.
$$
We can then rephrase the above equality as $\omega^{-\alpha-u+2} H_k = \omega^{\alpha+u-2} H_k$.  Although the current article usually focuses on the case where $\omega$ is a root of unity, these computations are valid for all $\omega$. It follows that $\alpha= -u+2$. This proves that $H_k = \omega^{-u+2} Z_{i_1} Z_{i_2} \dots Z_{i_u}$, and our first computation then shows that $\Tr_\lambda^\omega \bigl( [P_k] \bigr) = H_k + H_k^{-1}$. 
\end{proof}

We are now ready to prove Theorem~\ref{thm:RealizeInvariantsIntro}, which we repeat here for convenience. Recall that a character $r\in \RR$ associates a number $\Tr\,r(P_k)$ to the $k$--th puncture of $S$, where $P_k$ is a small loop going around the puncture. 

\begin{thm}
\label{thm:RealizeInvariants}
Assume that the surface $S$ has at least one puncture, that its Euler characteristic is negative,  that $A$ is a primitive $N$--root of $-1$ with $N$ odd, and that we are given:
\begin{enumerate}
\item a character  $r\in \RR$ realizing some ideal triangulation of $S$;
\item  a number $p_k\in \C$ such that $T_N (p_k) =- \Tr\,r(P_k)$ for each of the punctures of~$S$. 
\end{enumerate} 

Then, there exists an irreducible finite-dimensional representation $\rho \col \SSS \to \End(E)$ whose classical shadow is equal to $r$ and whose puncture invariants are the $p_k$. 
\end{thm}

\begin{proof}
Since $r\in \RR$ realizes the ideal triangulation $\lambda$, Proposition~\ref{prop:RealizeIdealTriang} provides a twisted homomorphism $\zeta\col \mathcal W(\tau_\lambda; \Z) \to \C^*$ and an associated representation $\mu_\zeta \col\ZZZ \to \End(\C)=\C$, such that 
$$
 \mu_\zeta \circ \Tr_\lambda^\iota  \bigl([K]\bigr) = - \Tr\, r(K)
$$
for every framed knot $K \subset S\times[0,1]$

By Lemma~\ref{lem:QuantumTracePunctureLoop}, the image of $[P_k]\in \SSS$ under the quantum trace homomorphism $\Tr^\omega_\lambda \col \SSS \to \ZZ$ is equal to $\Tr_\lambda^\omega\bigl([P_k]\bigr)=H_k + H_k^{-1}$ in $\ZZ$. Similarly, $\Tr_\lambda^\iota\bigl([P_k\bigr])=H_k + H_k^{-1}$ in $\ZZZ$. (Beware that we are using the same symbols to denote the skeins $[P_k] \in \SSS$ and $[P_k] \in \SSSS$, and the central elements $H_k \in \ZZ$ and $H_k \in \ZZZ$.)
Then, for $[P_k] \in \mathcal S^{-1}(S)$, 
$$
\Tr\, r(P_k) =- \mu_\zeta \circ \Tr^\iota_\lambda\bigl([P_k] \bigr) = -\mu_\zeta(H_k + H_k^{-1}) = -g_k -g_k^{-1}
$$
if  we set $g_k =  \mu_\zeta(H_k) \in \End(\C)=\C$. 

For each $k$, we are given a number $p_k\in \C$ such that  $T_N (p_k) =- \Tr\,r(P_k)=g_k +g_k^{-1}$. Lemma~\ref{lem:ChebElementaryProp} then provides an $N$--root $h_k= \sqrt[N]{g_k}$ of such that $p_k = h_k + h_k^{-1}$.

Proposition~\ref{prop:RepsSquareRootCheFock}  associates to the homomorphism $\zeta\col \mathcal W(\tau_\lambda; \Z) \to \C^*$ and to the $N$--roots $h_k= \mu_\zeta(H_k)^{\frac1N}$ an irreducible representation $\mu\col \ZZ \to \End(E)$ such that
\begin{enumerate}
\item $\mu(Z_\alpha^N) = \zeta(\alpha) \,\Id_E$ for every $\alpha \in \mathcal W(\tau_\lambda; \Z)$;
\item $\mu(H_k) = h_k\,\Id_E$ for every $k=1$, \dots, $s$.
\end{enumerate}
Composing with the quantum trace map $\Tr^\omega_\lambda \col \SSS \to \ZZ$, we now have a representation
$$
\rho = \mu \circ \Tr^\omega_\lambda \col \SSS \to \End(E).
$$

Let $K$ be a framed knot whose projection to $S$ has no crossing and whose framing is vertical. Then, for the associated skein $[K] \in \SSS$, 
$$
T_N \bigl( \rho([K])\bigr) = \rho \bigl( T_N([K])\bigr) =  \mu \circ \Tr^\omega_\lambda \bigl( T_N([K])\bigr) =  \mu \circ \Tr^\omega_\lambda \circ  \mathbf T^A \bigl([K]\bigr)
=\mu \circ \mathbf F^\omega \circ \Tr^\iota_\lambda \bigl([K]\bigr)
$$
by using the fact that $\rho$ is an algebra homomorphism for the first equality, by definition of the Chebyshev homomorphism $\mathbf T^A \col \SSSS \to \SSS$  in \S \ref{sect:ChebFrob} for the third equality, and by the miraculous cancellations of  Theorem~\ref{thm:ChebyQTracesFrob} for the last relation. In terms of the Frobenius homomorphism $\mathbf F^\omega \col \TTT   \to \TT$ introduced in \S \ref{sect:ChebFrob} and of the representation $\mu_\zeta \col\ZZZ \to \End(\C)=\C$, the property that $\mu(Z_\alpha^N) = \zeta(\alpha) \,\Id_E =  \mu_\zeta(Z_\alpha)$ for every $\alpha \in \mathcal W(\tau_\lambda; \Z)$ can be rephrased  as $\mu \circ \mathbf F^\omega = \mu_\zeta$.
Therefore, 
$$
T_N \bigl( \rho([K])\bigr)  =  \mu \circ \mathbf F^\omega \circ \Tr^\iota_\lambda \bigl([K]\bigr)
= \mu_\zeta \circ \Tr^\iota_\lambda  \bigl([K]\bigr)\, \Id_E 
= - \Tr\, r(K) \, \Id_E.
$$

Also, for the $k$--th puncture of $S$,
$$
\rho \bigl([P_k] \bigr) = \mu \circ \Tr^\omega_\lambda \bigl( [P_k] \bigr) = \mu(H_k + H_k^{-1}) = (h_k + h_k^{-1})\, \Id_E = p_k\, \Id_E. 
$$

If we knew that  $\rho$ was irreducible, we would be done with the proof of Theorem~\ref{thm:RealizeInvariants}. At this point, there is no reason for this property to hold. However, if $\rho$ is not irreducible, it suffices to consider an irreducible component $\rho' \col \SSS \to \End(F)$ with $F\subset E$. Restricting the above computations to $F$ shows that the classical shadow of the representation $\rho'$ is equal to the character $r\in \RR$,  and that its puncture invariants are equal to the numbers $p_k$. 
 \end{proof}
 
 \begin{rem}
We conjecture that, when $r$ is sufficiently generic in $\RR$, the representation $\rho = \mu \circ \Tr^\omega_\lambda $ used in the proof of Theorem~\ref{thm:RealizeInvariants}  is already irreducible, and that there is no need to restrict to an irreducible factor. We also conjecture that, for generic $r\in \RR$, there is a unique representation $\rho$ satisfying the conclusions of Theorem~\ref{thm:RealizeInvariants}, up to isomorphism. This second conjecture was recently proved by Nurdin Takenov \cite{Tak} for the one-puncture torus and the four-puncture sphere (building on earlier work of Bullock-Przytycki \cite{BullPrz} and Havl\'\i\v cek-Po\v sta \cite{HavPost} for the one-puncture torus). 
\end{rem}

\begin{rem}
\label{rem:ReducibleRep}
In the very non-generic case there $r(P_k)$ is the identity and $p_k = - \omega^4 - \omega^{-4}$ for some punctures, the representation $\rho = \mu \circ \Tr^\omega_\lambda $ is definitely reducible. This is a key ingredient of the ``puncture filling'' process developed in \cite{BonWon4}. 
\end{rem}

\section{A uniqueness property}
\label{sect:UniquenessProp}
We made choices in the proof of Theorem~\ref{thm:RealizeInvariants}. The goal of this section is to show that its output does not depend on these choices, provided we carefully specify our data and our construction. The resulting uniqueness statement will be used in the subsequent article \cite{BonWon4}, where we heavily rely on Theorem~\ref{thm:RealizeInvariants} and apply this statement to suitable punctured surfaces in order to construct representations of the skein algebra of a closed surface.

\subsection{Pleated surfaces and representations of $ \ZZZ$}

The proof of Theorem~\ref{thm:RealizeInvariants} hinges on Proposition~\ref{prop:RealizeIdealTriang} which, given a character $r\in \RR$, provides a twisted homomorphism $\zeta \colon \mathcal W(\tau_\lambda ; \Z) \to \C^*$ and its associated representation $\mu_\zeta \colon \ZZZ \to  \C$ such that 
$$
\mu_\zeta \circ \Tr_\lambda^\iota \bigl( [K] \bigr) = - \Tr\,r(K)
$$
for every framed knot $K \subset S \times [0,1]$. Recall that $\mu_\zeta$ and $\zeta$ are related by the property that $\mu_\zeta (Z_\alpha)=\zeta(\alpha) \in \C^*$ for every basis element $Z_\alpha \in \ZZZ$ associated to an edge weight system $\alpha \in \mathcal W(\tau_\lambda; \Z)$. 

For most characters $r\in \RR$, the homomorphism $\mu_\zeta \colon \ZZZ \to  \C$ is uniquely determined by $r$ and by the pleated surface  $(\widetilde f, \bar r)$. However this uniqueness fails, in a very specific way,  when the character $r\in \RR$ admits a very special type of internal symmetry which we now describe. 

The cohomology group $H^1(S;\Z_2)$ acts on the character variety $\RR$ by the property that, for every homomorphism $r\colon \pi_1(S) \to \SL(\C)$ and cohomology class $\epsilon \in H^1(S;\Z_2)$, the homomorphism $\epsilon r$ is defined by
$$\epsilon r(\gamma) = (-1)^{\epsilon(\gamma)} r(\gamma) \in \SL(\C)$$
for every $\gamma \in \pi_1(S)$. We say that $\epsilon \in H^1(S;\Z_2)$ is a \emph{sign-reversal symmetry} for the character $r\in \RR$ if the action of $\epsilon$ on $\RR$ fixes $r$. This is equivalent to the property that the trace $\Tr\,r(\gamma)$ is equal to $0$ for every $\gamma \in \pi_1(S)$ with $\epsilon(\gamma) \neq 0$. 

The group $H^1(S;\Z_2)$ also acts on the balanced Chekhov-Fock algebra $\ZZ$ by the property that $\epsilon Z_\alpha = (-1)^{\epsilon\left([\alpha] \right)} Z_\alpha$ for every $\epsilon \in H^1(S;\Z_2)$ and every $\alpha \in \mathcal W(\tau_\lambda; \Z)$, where $[\alpha] \in H_1(S;\Z_2)$ is the homology class associated to the edge weight system $\alpha$ as in Lemma~\ref{lem:StructureWeightSystems}.

\begin{prop}
\label{prop:SquareRootsShearBend}
Let the pleated surface $(\widetilde f, \bar r)$ have pleating locus the ideal triangulation $\lambda$, and let $r\in \RR$ be represented by a homomorphism $r\colon \pi_1(S) \to \SL(\C)$ lifting the monodromy $r\colon \pi_1(S) \to \SL(\C)$ of $(\widetilde f, \bar r)$.  Then there exists a homomorphism $\mu_\zeta \colon \ZZZ \to  \C$, associated to a twisted homomorphism $\zeta \colon \mathcal W(\tau_\lambda ; \Z) \to \C^*$, such that 
\begin{enumerate}
\item for each edge $e_i$ of $\lambda$, $\mu_\zeta(Z_i^2)$ is equal to the shear-bend parameter $x_i \in \C^*$ of $e_i$ in the pleated surface $(\widetilde f, \bar r)$;
\item $\mu_\zeta \circ \Tr_\lambda^\iota \bigl( [K] \bigr) = - \Tr\,r(K)$ for every framed knot $K \subset S \times [0,1]$.
\end{enumerate}

In addition, $\mu_\zeta$ is unique up to the action on $\ZZZ$ of an orientation-reversal symmetry $\epsilon \in H^1(S;\Z_2)$ of the character $r\in \RR$.
\end{prop}

We say that a homomorphism $\mu_\zeta \colon \ZZZ \to  \C$ satisfying the above conclusions is \emph{compatible} with the pleated surface $(\widetilde f, \bar r)$  and the character $r\in \RR$.

\begin{proof}[Proof of Proposition~\ref{prop:SquareRootsShearBend}] The existence is provided by Proposition~\ref{prop:RealizeIdealTriang}, or more precisely by its proof to guarantee that $\mu_\zeta(Z_i^2)=x_i$ for every edge $e_i$ of $\lambda$. 

To prove the uniqueness, suppose that we are given another homomorphism $\mu_{\zeta'} \colon \ZZZ \to  \C$ satisfying the same conclusions, and associated to a twisted homomorphism $\zeta' \colon \mathcal W(\tau_\lambda ; \Z) \to \C^*$. From the property that $\mu_{\zeta}(Z_i^2)=\mu_{\zeta'}(Z_i^2)= x_i$, we conclude that $\mu_{\zeta}(Z_\alpha)^2=\mu_{\zeta'}(Z_\alpha)^2$  and therefore $\mu_{\zeta}(Z_\alpha)=\pm \mu_{\zeta'}(Z_\alpha)$ for every $\alpha \in \mathcal W(\tau_\lambda ; \Z)$. Since $\mu_\zeta$ and $\mu_{\zeta'}$ are both algebra homomorphisms, there consequently exists a group homomorphism $\epsilon \colon \mathcal W(\tau_\lambda ; \Z) \to \Z_2$ such that $\mu_{\zeta}(Z_\alpha)=(-1)^{\epsilon(\alpha)} \mu_{\zeta'}(Z_\alpha)$ for every $\alpha \in \mathcal W(\tau_\lambda ; \Z)$. Another application of the property that $\mu_{\zeta}(Z_i^2)=\mu_{\zeta'}(Z_i^2)$ shows that $\epsilon$ is trivial on the subgroup $(2\Z)^n \subset \mathcal W(\tau_\lambda ; \Z)$ of Lemma~\ref{lem:StructureWeightSystems}. This statement then shows that $\epsilon$ comes from a homomorphism $H_1(S; \Z_2) \to \Z_2$, and can therefore be interpreted as a cohomology class $\epsilon \in H^1(S; \Z_2) $. 

In this cohomological interpretation of $\epsilon \in H^1(S; \Z_2) $, we have that $\mu_{\zeta}(Z_\alpha)=(-1)^{\epsilon([\alpha])} \mu_{\zeta'}(Z_\alpha)$ for every $\alpha \in \mathcal W(\tau_\lambda ; \Z)$. Namely, the homomorphisms $\mu_\zeta$, $\mu_{\zeta'} \colon \ZZZ \to  \C$ differ by the action of $\epsilon \in H^1(S;\Z_2)$ on $\ZZZ$.

Given a framed link $K\subset S \times [0,1]$, the construction of the quantum trace $\Tr_\lambda^\iota$  in \cite{BonWon1} shows that  $\Tr_\lambda^\iota \bigl( [K] \bigr)\in \ZZZ$ is a linear combination of monomials $Z_\alpha$ whose associated homology class $[\alpha] \in H_1(S;\Z_2)$, in the sense of Lemma~\ref{lem:StructureWeightSystems}, is the same as the class $[K] \in H_1(S;\Z_2)$ defined by $K$. As a consequence,
\begin{align*}
 \Tr\,r(K) =-\mu_{\zeta'}\circ \Tr_\lambda^\iota  \bigl( [K] \bigr) 
= - (-1)^{\epsilon(K)} \, \mu_\zeta \circ \Tr_\lambda^\iota  \bigl( [K] \bigr)  
= (-1)^{\epsilon(K)}  \Tr\,r(K)
\end{align*}
for every  framed link $K\subset S \times [0,1]$. 
This proves that $\epsilon \in H^1(S;\Z_2)$ is a sign-reversal symmetry for the character $r\in \RR$. 

As a consequence, the homomorphisms $\mu_\zeta$, $\mu_{\zeta'} \colon \ZZZ \to  \C$ differ by the action on $\ZZZ$ of a sign-reversal symmetry $\epsilon \in H^1(S;\Z_2)$ of $r\in \RR$.
\end{proof}

Characters with non-trivial sign-reversal symmetries exist, but are rare. In fact, the characters that have no (non-trivial) sign-reversal symmetries form a Zariski dense closed subset in $\RR$. (Hint: Choose a family of simple closed curves $\gamma_1$, $\gamma_2$, \dots, $\gamma_k$ in $S$ that generate $H_1(S; \Z_2)$, and consider the set of $r\in \RR$ such that $\Tr\, r(\gamma_i) \neq 0$ for some $i$.) This subset of $\RR$ includes all injective homomorphisms $\pi_1(S) \to \SL(\C)$, since their images contain no matrix with trace 0. In particular all ``geometric'' characters, corresponding to fuchsian or quasifuchsian groups, admit no sign-reversal symmetries.. 

More precisely, a simple algebraic manipulation shows that every character $r\in \RR$ with a non-trivial sign-reversal symmetry $\epsilon \in H^1(S; \Z_2)$ is represented by a homomorphism $r\colon \pi_1(S) \to \SL(\C)$ of  the following type: Considering $\epsilon$ as a group homomorphism $\epsilon \colon \pi_1(S) \to \Z_2$ and for an arbitrary $\gamma_0 \in \pi_1(S)$ with $\epsilon(\gamma_0) \neq 0$, there exists a group homomorphism $\theta \colon \ker \epsilon \to \C/2\pi\mathrm i \Z$ such that
$$
r(\gamma_0) = \begin{pmatrix}
\mathrm i & 0 \\ 0 & -\mathrm i
\end{pmatrix}
\text{ and }
r(\gamma) = \begin{pmatrix}
 \cosh \theta(\gamma) & \sinh \theta(\gamma) \\  \sinh \theta(\gamma) & \cosh \theta(\gamma)
\end{pmatrix}
$$
for every $\gamma \in \ker \epsilon$. In particular, noting the constraints that  $\theta(\gamma_0^2)=\pi \mathrm i$ and $\theta(\gamma_0 \gamma \gamma_0^{-1}) =-\theta(\gamma)$ for every $\gamma \in \ker \epsilon$, the space of such characters has complex dimension $2g+s-2$ in the $(6g+3s-6)$--dimensional character variety $\RR$ (where $g$ is the genus of the surface $S$ and $s$ is its number of punctures). 

\subsection{A strengthening of Theorem~\ref{thm:RealizeInvariants}}
Recall that, if the $k$--th puncture of $S$ is adjacent to the edges $e_{i_1}$, $_{i_2}$, \dots, $e_{i_u}$ of the ideal triangulation $\lambda$, it determines an element $H_k= [Z_{i_1}Z_{i_2} \dots Z_{i_u}] \in \ZZZ$. 
\begin{prop}
\label{prop:RealizeInvariantsSpecificForm}

Assume that the surface $S$ has at least one puncture, that its Euler characteristic is negative,  that $A$ is a primitive $N$--root of $-1$ with $N$ odd, and that we are given:
\begin{enumerate}
\item[(i)] a pleated surface $(\widetilde f, \bar r)$ with pleating locus $\lambda$, a character $r\in \RR$ lifting $\bar r\in \mathcal R_{\PSL(\C)}(S)$, and a homomorphism $\mu_\zeta \colon \ZZZ \to \C$ compatible with $(\widetilde f, \bar r)$ and $r$ as in Proposition~{\upshape \ref{prop:SquareRootsShearBend}};
\item[(ii)]  for each puncture of $S$ an $N$--root $h_k$ of $\mu_\zeta(H_k) \in \Z^*$. 
\end{enumerate}
 Then, up to isomorphism, there exists a  unique representation $\mu \colon \ZZ\to \End(E)$ of the balanced Chekhov-Fock algebra $\ZZ$ with the following properties:
\begin{enumerate}
\item the dimension of the vector space $E$ is equal to $N^{3g+s-3} $, where $g$ is the genus of the surface  $S$ and $s$ its number of punctures;
\item $ \mu (Z_\alpha^{N}) = \mu_\zeta(Z_\alpha) $ for every edge weight system $\alpha \in \mathcal W(\tau_\lambda; \Z)$, where we use the same symbol to represent the associated base elements $Z_\alpha \in \ZZ$ and $Z_\alpha \in \ZZZ$;
\item $ \mu(H_k) = h_k \,\Id_E $ for the central element $H_k \in \ZZ$ associated to the $k$--th puncture of $S$.
\end{enumerate}
In addition, the representation $\mu$ is irreducible and the representation $\rho = \mu \circ \Tr_\lambda^\omega \colon \SSS \to \End(E)$ has classical shadow $r \in \RR$, in the sense that 
$$
T_N \bigl( \rho ([K]) \bigr) = - \Tr\, r(K)\, \Id_E
$$
for every knot $K \subset S\times [0,1]$ whose projection to $S$ has no crossing and whose framing is vertical (where $T_N(x)$ is the $N$--th Chebyshev polynomial of the first type). 
\end{prop}

\begin{proof}
The existence and uniqueness part is essentially a restatement of the classification of irreducible representations of $\ZZ$ in Proposition~\ref{prop:RepsSquareRootCheFock}. The fact that $\rho$ has classical shadow $r$ follows from the proof of Theorem~\ref{thm:RealizeInvariants}. 
\end{proof}

Although the representation $\mu \colon \ZZ\to \End(E)$ of Proposition~\ref{prop:RealizeInvariantsSpecificForm} is irreducible, the representation  $\rho = \mu \circ \Tr_\lambda^\omega \colon \SSS \to \End(E)$ is not necessarily irreducible; see Remark~\ref{rem:ReducibleRep}.

\section{The case where $A^N=+1$}
\label{sect:ANplus1}

The case $A^N=+1$ can be deduced from the case $A^N=-1$ by the \emph{Barrett isomorphism} $B_\sigma \colon \SSS \to \mathcal S^{-A}(S)$ associated to a spin structure $\sigma$ on the surface $S$. This isomorphism is defined by the property that, for every framed link $K \subset S \times [0,1]$ with $k$ components, 
$$
B_\sigma \bigl( [K] \bigr) = (-1)^{k+\sigma(K)} [K] \in \mathcal S^{-A}
$$
where $\sigma(K)\in \Z_2$ is the monodromy of the framing of $K$ with respect to $\sigma$. See \cite{Barr} and \cite[\S2]{PrzS} for a proof that $B_\sigma \colon \SSS \to \mathcal S^{-A}(S)$ is an algebra isomorphism.

If $A^N=+1$, an irreducible finite-dimensional representation $\rho \colon \SSS \to \End(E)$ defines an irreducible representation $\rho' = \rho \circ B_\sigma \colon \mathcal S^{-A} (S) \to \End(E)$, to which we can apply Theorems~\ref{thm:InvariantsExist} and \ref{thm:RealizeInvariantsIntro}  since $(-A)^N =-1$ as $N$ is assumed to be odd. 
This process depends on the choice of a spin structure $\sigma$, but we can make if more canonical by the following construction.

 Let  $\Spin$ denote the set of isotopy classes of spin structures on $S$. Any two spin structures differ by an obstruction in $H^1(S;\Z_2)$, which defines an action of $H^1(S;\Z_2)$ on $\Spin$.  The cohomology group $H^1(S;\Z_2)$ also acts on the character variety $\RR$ by the property that, if $ r\in \RR$ and $\alpha \in H^1(S;\Z_2)$, then $\alpha  r\in \RR$ is defined by
$$
\alpha  r(\gamma) = (-1)^{\alpha(\gamma)}  r(\gamma) \in \SL(\C)
$$
for every $\gamma \in \pi_1(S)$. 

The \emph{twisted character variety} $\RS$ is then defined as the quotient
$$
\RS =\left(  \RR \times \Spin \right) /  H^1(S;\Z_2) .
$$

If the twisted character $\widehat r\in \RS$  is represented by $( r, \sigma)\in \RR\times \Spin$ and if $K$ is a framed knot in $S\times [0,1]$, the definition is designed so that the following \emph{trace}
$$
\Tr \, \widehat r(K) = -(-1)^{\sigma(K)} \Tr\, r(K).
$$
depends only on the twisted character $\widehat r\in \RS$, and not on its representative $( r, \sigma)\in \RR\times \Spin$. 

The correspondence $\rho \leftrightarrow \rho\circ  B_\sigma$ is used in \cite{BonWon3} to establish the following result. 

\begin{thm}
[\cite{BonWon3}]
\label{thm:InvariantsExistA^N=+1}
Suppose that $A$ is a primitive $N$--root of $+1$ with $N$ odd, and let $\rho \col \SSS \to \End(E)$ be an irreducible finite-dimensional representation of the Kauffman bracket skein algebra. Let $T_N(x)$ be the $N$-th normalized Chebyshev polynomial  of the first kind. 
\begin{enumerate}
\item  There exists a unique twisted character $\widehat r_\rho \in \RS$ such that
$$
T_N \bigl( \rho ([K]) \bigr) =- \bigl( \Tr\, \widehat r_\rho(K) \bigr) \Id_E
$$
for every framed knot $K\subset S \times [0,1]$ whose projection to $S$ has no crossing and whose framing is vertical. 

\item Let $P_k$ be a small simple loop going around the $k$--th puncture of $S$, and consider it as a knot in $S\times[0,1]$ with vertical framing. Then there exists  a number $p_k\in \C$ such that
$
\rho\bigl([P_k]\bigr) = p_k \Id_E
$.

\item The number $p_k$ of {\upshape (2)} is related to the twisted  character  $\widehat r_\rho \in \RS$ of 
 {\upshape (1)} by the property that  $
T_N (p_k) = -\Tr\, \widehat r_\rho(P_k)
$. \qed
\end{enumerate}
\end{thm}

The same correspondence $\rho \leftrightarrow \rho\circ  B_\sigma$ can be used to prove the following analogue of Theorem~\ref{thm:RealizeInvariantsIntro}.  We say that the twisted character $\widehat r\in \RS$ \emph{realizes} the ideal triangulation $\lambda$ of $S$ if the image $\bar r \in \mathcal R_{\PSL(\C)}(S)$ of $\widehat r$ under the natural projection $\RS \to \mathcal R_{\PSL(\C)}(S)$ realizes $\lambda$ in the sense of \S \ref{sect:ShearCoord}.

\begin{thm}
\label{thm:RealizeInvariantsA^N=+1}
Assume that the surface $S$ has at least one puncture, that its Euler characteristic is negative,   that $A$ is a primitive $N$--root of $+1$ with $N$ odd, and that we are given:
\begin{enumerate}
\item a twisted character  $\widehat r\in \RS$ which realizes some ideal triangulation $\lambda$ of $S$;
\item  a number $p_k\in \C$ such that $T_N (p_k) = - \Tr\, \widehat r(P_k)$ for each of the punctures of~$S$. 
\end{enumerate} 

Then, there exists an irreducible finite-dimensional representation $\rho \col \SSS \to \End(E)$ whose classical shadow is equal to $\widehat r$ and whose puncture invariants are the $p_k$. 
\end{thm}

\begin{proof}
Represent $\widehat r\in \RS$ by a pair $( r, \sigma)\in \RR\times \Spin$. Theorem~\ref{thm:RealizeInvariantsIntro} then provides an irreducible representation $\rho' \colon \mathcal S^{-A}(S) \to \End(E)$ with classical shadow $r\in \RR$ and puncture invariants equal to the $p_k$. Then, $\rho = \rho' \circ B_\sigma \colon \SSS \to \End(E)$ satisfies the required properties. 
\end{proof}

\section*{Appendix. The Thurston intersection form of a train track}

Let $\tau$ be a train track in the surface $S$, and let $\mathcal W(\tau; \Z)$ be the space of integer valued edge weights for $\tau$. Namely, an element $\alpha \in \mathcal W(\tau; \Z)$ assigns a weight $\alpha(e) \in \Z$ to each edge $e$ of $\tau$ in such a way that, at each switch $s$ of $\tau$, the sum of the weight of the edges of $\tau$ coming in on one side of $s$ is equal to the sum of the weights of the edges going out on the other side. This abelian group comes with an additional structure provided by the Thurston intersection form
$$
\Omega \col \mathcal W(\tau; \Z) \times \mathcal W(\tau; \Z) \to \Z
$$
defined as in \S \ref{subsect:AlgStructCheFock}. Namely,
$$
\Omega(\alpha, \beta) =  {\textstyle\frac12}  \kern -10pt \sum_{e \text{ right of }e'}  \bigl( \alpha(e)\beta(e') - \alpha(e')\beta(e) \bigr)
$$
where the sum is over all pairs $(e,e')$ where $e$ and $e'$ are two ``germs of edges'' emerging on the same side of a switch of $\tau$ with $e$ to the right of $e'$ ($e$ and $e'$ are not necessarily adjacent at that switch). At this point, $\Omega(\alpha , \beta)$ is only a half-integer, but Theorem~\ref{thm:StructureThurston} below will prove that it is indeed an integer.

We want to determine the algebraic structure of $\mathcal W(\tau; \Z)$ endowed with $\Omega$.
This is a classical property in the case of real-valued edge weights (see for instance \cite[\S3.2]{PenH} or \cite[\S3]{Bon97}), but the subtleties of the integer-valued case seem less well known.  The result is of independent interest because, beyond  the scope of this article, integer-valued edge weight  do occur in geometric situations where the Thurston intersection form is also relevant. One such instance arises for general pleated surfaces where the pleating locus is allowed to have uncountably many leaves, as opposed to the simpler pleated surfaces considered in \S \ref{sect:ShearCoord}. The  bending of such a pleated surface is measured by an edge weight system valued in $\R/2\pi \Z$ for a train track carrying the pleating locus, and this edge weight system is related to rotation numbers by the Thurston intersection form  \cite{Bon97}.

The complement $S-\tau$ of the train track $\tau$ admits a certain number of ``spikes'', each locally delimited by two edges of $\tau$ that approach the same side of a switch of $\tau$. Thicken $\tau$ to a subsurface $U\subset S$ that deformation retracts to $\tau$. Each component of $U-\tau$ is then an annulus that contains one component of $\partial U$ and a certain number of spikes of $S-\tau$. We can then consider the genus $h$ of $U$, and the number $n_{\mathrm{even}}$ (resp. $n_{\mathrm{odd}}$) of components of $U-\tau$ that contain an even (resp. odd) number of spikes. 

A component $U_1$ of $U-\tau$ that contains an even number $n_1>0$ of spikes of $S-\tau$ determines, up to sign, an element of $\mathcal W(\tau; \Z)$ as follows. The core of $U_1$ is homotopic to a closed curve $\gamma_1$ in $\tau$ that is made up of arcs $k_1$, $k_2$, \dots, $k_{n_1}$, $k_{n_1+1}=k_1$, in this order, such that each arc $k_1$ is immersed in $\tau$ and such that two consecutive arcs $k_i$ and $k_{i+1}$ locally bound a spike of $U_1$ at their common end point. For each edge $e$ of $\tau$, we can then consider
$$
\alpha(e) = \sum_{i=1}^{n_1} (-1)^i \alpha_i(e) \in \Z
$$
where $\alpha_i(e)\in \{0,1,2\}$ is the number of times the arc $k_i$ passes over the edge $e$. Because the signs $(-1)^i$ alternate at the spikes of $U_1$ (using the fact that $n_1$ is even for $i=n_1$), one easily sees that these edge weights $\alpha(e)$ satisfy the switch conditions, and therefore define an edge weight system $\alpha \in \mathcal W(\tau; \Z)$. 

A component $U_1$ of $U-\tau$ that contains no spike similarly determines an edge weight system $\alpha\in \mathcal W (\tau; \Z)$. The core of $U_1$ is now homotopic to a closed curve $\gamma_1$ immersed in $\tau$, and $\alpha$ associates to each edge $e$ the number $\alpha(e)$ of times $\gamma_1$ passes over $e$. 

\begin{thm}
\label{thm:StructureThurston}
For a connected train track $\tau$ in the surface $S$, let the numbers $h$, $n_{\mathrm{even}}$ and $n_{\mathrm{odd}}$ be defined as above. Then, the lattice 
$\mathcal W(\tau; \Z)$ of integer valued edge weight systems for $\tau$ admits a basis in which the Thurston intersection form $\Omega$ is block diagonal with 
\begin{itemize}
\item
 $h$  blocks 
$\begin{pmatrix}
0&1\\-1&0
\end{pmatrix}$,
$\frac12 n_{\mathrm{odd}} -1 $  blocks 
$\begin{pmatrix}
0&2\\-2&0
\end{pmatrix}$, and
 $n_{\mathrm{even}}$ blocks
$\begin{pmatrix}
0
\end{pmatrix}$ if $n_{\mathrm{odd}} >0$;

\item
 $h$  blocks 
$\begin{pmatrix}
0&1\\-1&0
\end{pmatrix}$
 and
 $n_{\mathrm{even}}$ blocks
$\begin{pmatrix}
0
\end{pmatrix}$ if $n_{\mathrm{odd}} =0$ and $\tau$ is non-orientable;

\item
 $h$  blocks 
$\begin{pmatrix}
0&1\\-1&0
\end{pmatrix}$
 and
 $n_{\mathrm{even}}-1$ blocks
$\begin{pmatrix}
0
\end{pmatrix}$ if $n_{\mathrm{odd}} =0$ and $\tau$ is orientable.

\end{itemize}

In addition, in all cases, we can choose the base elements corresponding to the blocks $\begin{pmatrix}
0
\end{pmatrix}$ to be  edge weight systems associated as above to components of $U-\tau$ that contain an even number of spikes. 

\end{thm}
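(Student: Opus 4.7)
The plan is to reduce the theorem to a geometric computation inside a regular thickening $U \subset S$ of $\tau$. First, I would set up the dictionary between edge weight systems and carried multicurves: a non-negative $\alpha \in \mathcal W(\tau; \Z)$ is realized by a multicurve $C_\alpha \subset U$ having $\alpha(e)$ parallel strands over each edge $e$ (the switch conditions are precisely what makes this well defined), and writing $\alpha = \alpha^+ - \alpha^-$ extends this to oriented multicurves for arbitrary $\alpha$. A direct bookkeeping at each switch then shows that $2\Theta(\alpha, \beta)$ equals the algebraic intersection number of $C_\alpha$ and $C_\beta$ after a small transverse perturbation near each switch, and that the contributions from the two sides of each switch organize into pairs of equal parity, so that $\Theta(\alpha,\beta) \in \Z$. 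This recovers the claim of Lemma~\ref{lem:ThurstonHalfIntersection}.

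With this dictionary, I would identify $\ker \Theta$ with the span of the weight systems $\alpha_{U_j}$ associated in the statement to the components $U_j$ of $U - \tau$ with an even number of spikes. Each $\alpha_{U_j}$ is realized by a multicurve isotopic into the core circle of $U_j$, so it pairs trivially with every carried multicurve and lies in the kernel; conversely, the standard real-coefficient argument (as in \cite[\S3.2]{PenH} and \cite[\S3]{Bon97}) adapts to show that these $\alpha_{U_j}$ span $\ker \Theta$, and a small case analysis determines the number of relations among them in each of the three subcases listed in the theorem. For the non-degenerate quotient, $h$ carefully chosen pairs of carried simple closed curves projecting to a standard symplectic basis of the genus of $U$ produce the $h$ blocks $\begin{pmatrix} 0 & 1 \\ -1 & 0 \end{pmatrix}$. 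The odd-spike components are then paired off: two such components $U_j$, $U_{j'}$ are connected by a carried arc, and the natural ``half-weight'' interpolation between their core classes violates the switch condition by exactly one unit at each of the two odd spikes, so that only twice that interpolation lies in $\mathcal W(\tau; \Z)$ and its pairing against the obvious transverse generator equals $\pm 2$ rather than $\pm 1$. This produces the $\tfrac12 n_{\mathrm{odd}} - 1$ blocks $\begin{pmatrix} 0 & 2 \\ -2 & 0 \end{pmatrix}$, the $-1$ reflecting one relation with the previously chosen genus basis.

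The principal technical obstacle, and the feature that genuinely distinguishes the integer case from the classical real case, is precisely this parity phenomenon at the odd-spike components. The heart of the argument will be a careful verification that $\mathcal W(\tau; \Z) \subset \mathcal W(\tau; \Q)$ is exactly the sublattice of weight systems whose reduction modulo $2$ satisfies an orientation cocycle condition dictated by the odd-spike components, together with a check that the explicit generators described above form a $\Z$-basis of $\mathcal W(\tau; \Z)$ and not merely of a finite-index subgroup. A final rank count, matching $2h + \tfrac12 n_{\mathrm{odd}} + n_{\mathrm{even}}$ corrected by the appropriate constant in each subcase against the rank of $\mathcal W(\tau; \Z)$ computed from the Euler characteristic of $U$, then closes the argument and yields the stated block-diagonal normal form; the additional assertion that the zero blocks can be realized by the prescribed $\alpha_{U_j}$ is built into the construction.
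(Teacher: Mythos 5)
Your high-level architecture --- kernel generated by the even-spike components, a genus symplectic block, and $\pm 2$ blocks from pairing off odd-spike components, closed by a rank count --- does match the shape of the paper's answer. But the technical mechanism you propose has a genuine gap at the very first step. You cannot realize an arbitrary $\alpha \in \mathcal W(\tau;\Z)$ by writing $\alpha = \alpha^+ - \alpha^-$ and invoking ``oriented multicurves'': the componentwise positive and negative parts of $\alpha$ do not individually satisfy the switch conditions, and when $\tau$ is non-orientable a carried multicurve cannot be coherently oriented so as to produce signed weights over the edges. Relatedly, the weight system associated to a component $U_j$ of $U-\tau$ with a positive even number of spikes is the \emph{alternating} sum $\alpha(e) = \sum_i (-1)^i \alpha_i(e)$, not a carried-multicurve weight, so ``a multicurve isotopic into the core circle of $U_j$'' does not realize it (only the zero-spike case is an honest carried multicurve). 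The ``half-weight interpolation'' argument is too imprecise to force the blocks $\begin{pmatrix}0&2\\-2&0\end{pmatrix}$ rather than $\begin{pmatrix}0&1\\-1&0\end{pmatrix}$, and ``carried arc'' is a slip: the arcs one pairs odd-spike components with are Poincar\'e duals to a $\Z_2$-cohomology class and are transverse to $\tau$, not carried by it.

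The paper's resolution is exactly the step your proposal skirts: pass to the orientation double cover $\widehat\tau \to \tau$, extend it to $\widehat U \to U$, and identify $\mathcal W(\tau;\Z)$ with the $(-1)$-eigenspace $H_1(\widehat U;\Z)^-$ of the covering involution (Lemma~\ref{lem:EdgeWeightsHomology}), under which $\Theta$ becomes half the homological intersection form (Lemma~\ref{lem:ThurstonHalfIntersection}). One then splits $\widehat U$ along a curve $\gamma$ into a piece $\widehat U_2$ over which the cover is trivial (carrying the genus and $\partial_{\mathrm{even}}U$) and a piece $\widehat U_1$ over which it is connected (carrying $\partial_{\mathrm{odd}}U$), and runs Mayer--Vietoris on the $(-1)$-eigenspaces. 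The dichotomy of block sizes then drops out structurally: on the trivial double cover the anti-diagonal intersection form is twice that of $U_2$, giving $\begin{pmatrix}0&\pm1\\\mp1&0\end{pmatrix}$ after the global halving, while on the connected cover $\widehat U_1$ the explicit basis $[\widehat\beta_j]-\sigma_*[\widehat\beta_j]$ yields pairings of $\pm 4$, giving $\begin{pmatrix}0&\pm2\\\mp2&0\end{pmatrix}$ after halving. This is the precise mechanism behind the simultaneous occurrence of both block types that your proposal only gestures at; without the double cover it is very hard to make the parity phenomenon at odd-spike components rigorous.
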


In particular, $n_{\mathrm{odd}}$ is always even.

\begin{proof}

We will subdivide the proof into several lemmas. The reader may recognize many analogies with the arguments used in the proof of \cite[Prop.~5]{BonLiu}.

We first discuss a classical homological interpretation of the elements of $\mathcal W(\tau; \Z)$ and of the Thurston intersection form $\Omega$. 

Because the edges of $\tau$ are not oriented, an edge weight system does not directly define a homology class in $H_1(\tau; \Z)$. Instead consider the $2$--fold orientation covering $\widehat\tau$ of $\tau$, consisting of all pairs $(x,o)$ where $x\in \tau$ and $o$ is a local orientation of the train track $\tau$ at $x$. Note that $\widehat\tau$ is a canonically oriented train track, and that the covering involution $\sigma\col \widehat\tau \to \widehat\tau$ that exchanges the two sheets of the covering reverses the orientation of $\widehat\tau$. 

An edge weight system $\alpha\in  \mathcal W(\tau; \Z) $ lifts to a weight system $\widehat\alpha\in  \mathcal W(\widehat\tau; \Z) $. Endowing each (oriented) edge of $\widehat \tau$ with the weight assigned by $\widehat \alpha$ defines a chain, which is closed because of  the switch condition and therefore defines a homology class $[\widehat \alpha] \in H_1(\widehat\tau; \Z)$. Note that $\sigma_* \bigl( [\widehat\alpha] \bigr) = - [\widehat\alpha]$ since the covering involution $\sigma$ reverses the canonical orientation of $\widehat\tau$. 

Conversely, each homology class  $[\widehat \alpha] \in H_1(\widehat\tau;\Z)$ is represented by a unique linear combination of the edges of $\widehat\tau$, and therefore determines  an edge weight system $\widehat \alpha \in \mathcal W(\widehat\tau; \Z)$. Assuming in addition that $\sigma_* \bigl( [\widehat\alpha] \bigr) = - [\widehat\alpha]$, this edge weight system is invariant under the action of $\sigma$, and therefore comes from an edge weight system $\alpha \in \mathcal W (\tau; \Z)$. This proves:

\begin{lem}
\label{lem:EdgeWeightsHomology}
The above correspondence  identifies the space $\mathcal W (\tau; \Z)$ of edge weight systems  to the eigenspace
$$
H_1(\widehat\tau;\Z)^- = \bigl \{ [\widehat \alpha] \in H_1(\widehat\tau;\Z); \sigma_* \bigl( [\widehat\alpha] \bigr) = - [\widehat\alpha]\bigr\} \subset H_1(\widehat\tau;\Z)
$$
of the homomorphism $\sigma_*\col H_1(\widehat \tau; \Z)\to H_1(\widehat \tau; \Z)$ induced by $\sigma$. \qed
\end{lem}

To describe the Thurston intersection form in this homological framework, consider the subsurface $U$  deformation retracting to $\tau$. The covering $\widehat \tau \to \tau$ uniquely extends to a 2--fold covering $\widehat U \to U$, whose  covering involution $\sigma\col \widehat U \to \widehat U$ extends our previous involution $\sigma$.

\begin{lem}
\label{lem:ThurstonHalfIntersection}
If $[\widehat \alpha]$, $[\widehat\beta] \in H_1(\widehat\tau)^-$ are associated to the edge weight systems $\alpha$, $\beta\in \mathcal W(\tau_\lambda; \Z) $,
$$
\Omega(\alpha, \beta ) ={\textstyle \frac 12 } \, [\widehat\alpha] \cdot [\widehat\beta]
$$
where $\cdot$ denotes the algebraic intersection number of classes  of $H_1(\widehat U;\Z)\cong H_1(\widehat \tau;\Z)$.  In addition, $[\widehat\alpha] \cdot [\widehat\beta]$ is even, and $\Omega(\alpha, \beta)$ is an integer.
\end{lem}

\begin{proof}
 To prove the first statement push  the oriented train track $\widehat \tau$ to its left  to obtain a train track $\widehat\tau'\subset \widehat U$ that is transverse to $\widehat\tau$,   realize the homology class $[\widehat\alpha]$ by $\widehat \tau$ endowed with the edge multiplicities coming from $\alpha$,  realize $[\widehat\beta]$ by $\widehat \tau'$ endowed with the edge multiplicities coming from $\beta$, and use this setup to compute the algebraic intersection number $  [\widehat\alpha] \cdot [\widehat\beta]$. Evaluating  the contribution to $  [\widehat\alpha] \cdot [\widehat\beta]$ of each point of $\widehat\tau \cap \widehat\tau'$ then shows that this algebraic intersection number is equal to $2\Omega(\alpha, \beta)$. 
 
The second statement is obtained by a similar but different computation of $  [\widehat\alpha] \cdot [\widehat\beta]$. Perturb $\tau$ to a train track $\tau''$ that is transverse to $\tau$, and let $\widehat\tau''$ be the pre-image of $\tau''$ in $\widehat U$. Now compute  $  [\widehat\alpha] \cdot [\widehat\beta]$ by realizing the homology class $[\widehat\beta]$ by $\widehat \tau''$ endowed with the edge multiplicities coming from $\beta$, while still realizing $[\widehat\alpha]$ by $\widehat \tau$ endowed with the edge multiplicities coming from $\alpha$. The intersection $\widehat\tau \cap \widehat\tau''$ splits into pairs of points interchanged by the covering involution $\sigma$, and the two points in each pair have the same contribution to $  [\widehat\alpha] \cdot [\widehat\beta]$. It follows that $  [\widehat\alpha] \cdot [\widehat\beta]$ is even.
\end{proof}

 We now need to better understand the action of $\sigma_*$ on the homology group $H_1(\widehat U; \Z)$. 
 
 It will be convenient to systematically use a notation which already appeared in Lemma~\ref{lem:EdgeWeightsHomology}. If $V$ is a space where some restriction of the covering involution $\sigma$ induces a homomorphism $\sigma_*$, then
 $$
 V^- = \{ \alpha \in V; \sigma_*(\alpha) = -\alpha \}. 
 $$
 For instance, Lemma~\ref{lem:EdgeWeightsHomology} provides a natural isomorphism $ \mathcal W(\tau; \Z) \cong H_1(\widehat U; \Z)^- $.

Let $\partial_{\mathrm{even}}U$ be the union of the $n_{\mathrm{even}}$ components of $\partial U$ that are adjacent to an even number of spikes to $S-\tau$, and set $\partial_{\mathrm{odd}}U=\partial U -  \partial_{\mathrm{even}}U$. 

\begin{lem}
\label{lem:EvenSpikes}
Let $\gamma_1$ be a component of $\partial_{\mathrm{even}} U$, and let $\widehat\gamma_1$ be its pre-image in $\widehat U$. Then $H_1(\widehat \gamma_1);\Z)^- \cong \Z$, and the image  in $H_1(\widehat U; \Z)^- \cong \mathcal W(\tau; \Z)$ of one of its generators coincides up to sign with the edge weight system that we associated right before Theorem~{\upshape \ref{thm:StructureThurston}} to the component $U_1$ of $U-\tau$ that contains $\gamma_1$. 
\end{lem}
\begin{proof}
As right above Theorem~\ref{thm:StructureThurston}, the curve $\gamma_1$ is homotopic to a closed curve $\gamma_1'$ in $\tau$ that is made up of $n_1$ arcs $k_1$, $k_2$, \dots, $k_{n_1}$, $k_{n_1+1}=k_1$, in this order, such that each arc $k_1$ is immersed in $\tau$ and such that two consecutive arcs $k_i$ and $k_{i+1}$ locally bound a spike of $U_1$ at their common end point. Because $n_1$ is even, there are two possible ways to orient these arcs in such a way that consecutive arcs have opposite orientations. This shows that $\gamma_1'$ has two distinct lifts to $\widehat\tau$, and therefore that the pre-image $\widehat\gamma_1$ of $\gamma_1$ in $\widehat U$ consists of two components of $\partial \widehat U$ that are exchanged by the covering involution. This provides an isomorphism $H_1(\widehat \gamma_1; \Z) \cong \Z \oplus \Z$ where $\sigma_*$ exchanges the two factors. It immediately follows that $H_1(\widehat \gamma_1; \Z)^- \cong \Z$. 

If $\widehat \gamma_1' \subset \widehat\tau$ denotes one of the two lifts of $\gamma_1'$ to $\widehat\tau$, the  image of $H_1(\widehat \gamma_1; \Z)^-$ in $H_1(\widehat U; \Z)^- \cong H_1(\widehat \tau; \Z)^- \cong \mathcal W(\tau; \Z)$ is generated by $[\widehat\gamma_1']-\sigma_*\bigl([\widehat\gamma_1'] \bigr)$. The second statement easily follows. 
\end{proof}

To prove Theorem~\ref{thm:StructureThurston}, we will first restrict attention to the case where $n_{\mathrm{odd}}>0$. This is equivalent to the property that $\partial_{\mathrm{odd}}U$ is non-empty. 

We just saw that the restriction of the covering $\widehat U \to U$ above $\partial_{\mathrm{even}}  U$ is trivial; similarly, its restriction above each component of $\partial_{\mathrm{odd}}  U$ is non-trivial. 
Therefore, the covering $\widehat U \to U$ is classified by a cohomology class in $H^1(U; \Z_2)$ which evaluates to 0 on the elements of $\partial_{\mathrm{even}}U$ and to 1 on the components of $\partial_{\mathrm{odd}}U$. 

Since the subset  $\partial_{\mathrm{odd}}U$ is non-empty, and  can therefore realize the  cohomology class   classifying the covering $\widehat U \to U$ as the Poincar\'e dual of  a family $K\subset U$ of disjoint arcs whose boundary $\partial K = K\cap \partial U$ consists of one point in each component of  $\partial_{\mathrm{odd}}U$. 

Split $U$ along a separating simple closed curve $\gamma$ to isolate $K$ inside of a planar surface $U_1\subset S$ with boundary $\partial U_1 = \gamma \cup \partial_{\mathrm{odd}}U$, while the closure $U_2$ of $U-U_1$ has genus $h$ and boundary  $\partial U_2 = \gamma \cup \partial_{\mathrm{even}}U$. Let $\widehat U_1$ and $\widehat U_2$ be the respective pre-images of $U_1$ and $U_2$ in $\widehat U$. 

Since $K$ is disjoint from $U_2$, the covering $\widehat U_2 \to U_2$ is trivial, and $\widehat U_2$ consists of two disjoint copies of the surface $U_2$ which are exchanged by $\sigma$.

The covering $\widehat U_1 \to U_1$ is non-trivial above each component of $\partial_{\mathrm{odd}} U$ and trivial above $\gamma$. Since the surface $U_1$ is planar, an Euler characteristic computation shows  that $\widehat U_1$ has genus $\frac12 n_{\mathrm{odd}} -1 $ and has $n_{\mathrm{odd}} +2$ boundary components. 

Consider the Mayer-Vietoris exact sequence
$$
0 \to H_1(\widehat \gamma; \Z) 
\to H_1(\widehat U_1; \Z) \oplus H_1(\widehat U_2; \Z)
\to H_1(\widehat U; \Z) 
\to 0
$$
where $\widehat\gamma$ denotes the pre-image of $\gamma$ in $\widehat U$. (To explain the $0$ on the right, note that the map $H_0(\widehat\gamma;\Z) \to H_0(\widehat U_2; \Z)$ is injective.)

\begin{lem}
\label{lem:ExactSeq}
Remembering that  $V^-$ denotes the $(-1)$--eigenspace of the action of $\sigma_*$ over a space $V$, the above exact sequence induces another exact sequence 
$$
0 \to H_1(\widehat \gamma; \Z) ^-
\to H_1(\widehat U_1; \Z) ^-\oplus H_1(\widehat U_2; \Z)^-
\to  H_1(\widehat U; \Z) ^-
\to 0.
$$
\end{lem}

\begin{proof}
The only point that requires some thought is the fact that the third homomorphism is surjective.

Given $u\in H_1(\widehat U;\Z)^-$, 
the first exact sequence provides $u_1\in H_1(\widehat U_1;\Z)$ and $u_2\in H_1(\widehat U_2;\Z)$ such that $ u =  u_1 +  u_2$ in $H_1(\widehat U;\Z)$. Since $\sigma_*(u)=-u$, we conclude that there exists $v\in H_1(\widehat \gamma; \Z)$ such that $ \sigma_*(u_1)= -u_1 +v$ in $H_1(\widehat U_1;\Z)$ and  $ \sigma_*(u_2)=-u_2 -v$ in $H_1(\widehat U_2;\Z)$. Note that $v\in H_1(\widehat \gamma; \Z)$ is invariant under $\sigma_*$. Therefore, for the isomorphism $H_1(\widehat\gamma; \Z) \cong H_1(\gamma; \Z) \oplus H_1(\gamma; \Z)$ coming from the fact that each of the two components of $\widehat\gamma$ is naturally identified to $\gamma$, $v=(w, w)$ for some $w\in H_1(\gamma; \Z)$. If we replace $u_1$ by $u_1' = u_1 -(w,0)$ and $u_2$ by $u_2'=u_2+(w,0)$, we now have that $u=u_1' + u_2'$ with $ \sigma_*(u_1')= -u_1'$ and $ \sigma_*(u_2')= -u_2'$, as requested. 
\end{proof}

We now analyze the terms of the exact sequence of Lemma~\ref{lem:ExactSeq}.

The space $H_1(\widehat U_2; \Z)^-$ is easy to understand, because $\widehat U_2$ is made up of two disjoint copies of $U_2$, which are exchanged by the covering involution $\sigma$. Therefore,  $H_1(\widehat U_2; \Z)\cong H_1( U_2; \Z) \oplus H_1( U_2; \Z)$ and, for this isomorphism, $H_1(\widehat U_2; \Z)^-$ corresponds to $\{ (\alpha,- \alpha) ; \alpha \in H_1( U_2; \Z)\} $. This defines an isomorphism $H_1(\widehat U_2; \Z)^- \cong H_1( U_2; \Z)$, for which the intersection form of $H_1(\widehat U_2; \Z)^- $ corresponds to twice the intersection form of $ H_1( U_2; \Z)$.

\begin{lem}
\label{lem:ComputeU2}

 There  exists a  basis for $H_1(\widehat U_2; \Z)^- $  in which the intersection form  is block diagonal with $h$ blocks
$
\begin{pmatrix}
0 & -2\\ 2 &0
\end{pmatrix}
$
and $n_{\mathrm{even}}$ blocks
$
\begin{pmatrix}
0
\end{pmatrix}
$. 

In addition, we can arrange that the basis elements corresponding to the blocks
 $
\begin{pmatrix}
0
\end{pmatrix} $ are the images of generators of $H(\widehat\alpha; \Z)^- \cong \Z$ as $\widehat\alpha \subset \widehat U_2$ ranges over all preimages of components $\alpha$ of $\partial_{\mathrm{even}}U$, and that a generator of  $H_1(\widehat \gamma; \Z)^- \cong \Z$ is sent to the sum of these elements.
\end{lem}

\begin{proof} The surface $U_2$ has genus $h$ and has $n_{\mathrm{even}}+1$ boundary components, and $\gamma$ is one of these boundary components. We can therefore find a basis for $H_1(U_2; \Z)$ in which the intersection form  is block diagonal with $h$ blocks
$
\begin{pmatrix}
0 & -1\\ 1 &0
\end{pmatrix}
$
and $n_{\mathrm{even}}$ blocks
$
\begin{pmatrix}
0
\end{pmatrix}
$. In addition, since $\partial U_2 = \gamma \cup \partial_{\mathrm{even}}U$, we can arrange that  the basis elements  corresponding to the blocks 
$
\begin{pmatrix}
0
\end{pmatrix}
$
are the images of generators of $H_1(\alpha; \Z)$ as $\alpha$ ranges over all components of $ \partial_{\mathrm{even}}U$, while the image of a generator of $H_1(\gamma;\Z)$ is sent to the sum of these elements. 

The result then follows by considering the  isomorphism $H_1(\widehat U_2; \Z)^- \cong H_1( U_2; \Z)$ mentioned above.
\end{proof}

We now consider  $H_1(\widehat U_1; \Z)^-$.

\begin{lem}
\label{lem:ComputeU1}
There exists a basis for $H_1(\widehat U_1; \Z)^-$   in which the intersection form is block diagonal with $\frac 12 n_{\mathrm{odd}}-1$ blocks
$
\begin{pmatrix}
0&4\\-4&0
\end{pmatrix}
$
and with one block 
$
\begin{pmatrix}
0
\end{pmatrix}
$. In addition, the block
$
\begin{pmatrix}
0
\end{pmatrix}
$
corresponds to the image of the homomorphism $H_1(\widehat\gamma;\Z)^- \to H_1(\widehat U_1; \Z)^-$ induced by the inclusion map. 
\end{lem}

\begin{proof}
We will use an explicit description of the covering $\widehat U_1 \to U_1$, with a specific basis for $H_1(\widehat U_1; \Z)$. 

Recall that this covering is classified by a cohomology class in $H^1(U_1; \Z_2)$ that is dual to a family  $K\subset U_1$ of  $\frac12 n_{\mathrm{odd}}$ disjoint arcs, with  one boundary point in each component of $\partial_{\mathrm{odd}} U$. Index the components of $\partial_{\mathrm{odd}} U$ as $\alpha_1$, $\alpha_2$, \dots, $\alpha_{n_{\mathrm{odd}}}$ and the components of $K$ as $k_1$, $k_3$, $k_5$, \dots, $k_{n_{\mathrm{odd}}-1}$ in such a way that   $k_{2i-1}$ joins $\alpha_{2i-1}$ to $\alpha_{2i}$. Add to $K$ a family of disjoint arcs $k_2$, $k_4$, \dots, $k_{n_{\mathrm{odd}}-2}$, disjoint from the $k_{2i-1}$, such that each $k_{2i}$ joins $\alpha_{2i}$ to $\alpha_{2i+1}$. See Figure~\ref{fig:U1}.

\begin{figure}[htbp]

\SetLabels
(.5 * .96) $\widehat \gamma$\\
(.5 * .23) $ \gamma$\\
(.095 * .69) $\widehat\alpha_1$\\
(.23 * .69) $\widehat\alpha_2$\\
(.29 * .69) $\widehat\alpha_3$\\
(.425 * .69) $\widehat\alpha_4$\\
( .94* .69) $\widehat\alpha_{n_{\mathrm{odd}}}$\\
(.11 * .62) $\widehat\beta_1$\\
(.23 * .45) $\widehat\beta_2$\\
(.31 * .59) $\widehat\beta_3$\\
(.44 * .45) $\widehat\beta_4$\\
( .8* .45) $\widehat\beta_{n_{\mathrm{odd}}-2}$\\
( .935* .62) $\widehat\beta_{n_{\mathrm{odd}}-1}$\\
(.108 * .128) \tiny $\alpha_1 $ \\
( .203*.128 )\tiny $\alpha_2 $ \\
(.298 *.128 ) \tiny$ \alpha_3$ \\
(.399 *.128 ) \tiny$ \alpha_4$ \\
(.17 * .05) $\beta_1$\\
(.26 * .05) $\beta_2$\\
(.35 * .05) $\beta_3$\\
(.44 * .05) $\beta_4$\\
( .5*.3 )\Large $\downarrow $ \\
(.5 * .69) $\dots $ \\
\endSetLabels
\centerline{\AffixLabels{ \includegraphics{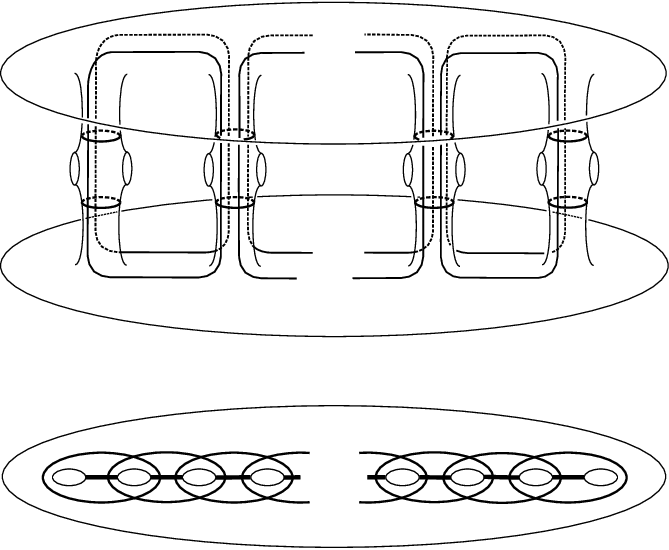}}}

\caption{}
\label{fig:U1}
\end{figure}

For $i=1$, 2, \dots, $n_{\mathrm{odd}}-1$, consider a small regular neighborhood of $k_i \cup \alpha_i \cup \alpha_{i+1}$ in $U_1$ and let $\beta_i$ be the boundary component of this neighborhood which is neither $\alpha_i$ nor $\alpha_{i+1}$; endow $\beta_i$ by the corresponding boundary orientation. Orient each curve $\alpha_i$ by the boundary orientation of $\partial_{\mathrm{odd}} U$. 

The pre-image of each curve $\alpha_i$ is a single curve $\widehat \alpha_i$, which we orient by the orientation of $\alpha_i$. 
The pre-image of $\beta_j$  in $\widehat U_1$ consists of two disjoint curves. Arbitrarily choose one of these curves $\widehat \beta_j$ and orient it by the orientation of $\beta_j$.  Then, the $[\widehat \alpha_i]$ and $[\widehat \beta_j]$   form a basis for $H_1(\widehat U_1; \Z)$.  See Figure~\ref{fig:U1}. 

Consider an element $u\in H_1(\widehat U_1;\Z)$, uniquely expressed in this basis as 
$$
u = \sum_{i=1}^{n_{\mathrm{odd}}} a_i [\widehat \alpha_i] + 
 \sum_{j=1}^{n_{\mathrm{odd}}-1} b_j [\widehat \beta_j] 
$$
with all $a_i$, $b_j\in \Z$. By construction of the curves $\widehat \alpha_i$ and $\widehat \beta_j$, 
$$
\sigma_* \bigl( [\widehat \alpha_i]\bigr) =  [\widehat \alpha_i] 
\text{ and }
\sigma_* \bigl( [\widehat \beta_j]\bigr) = -[\widehat \beta_j]  - [\widehat \alpha_j] - [ \widehat \alpha_{j+1}].
$$

If $u$ belongs to $ H_1(\widehat U_1;\Z)^-$, namely if $\sigma_*(u)=-u$, it follows from these observations and from the consideration of the coefficients of each $[\widehat \alpha_i]$ that we necessarily have 
\begin{align*}
b_1 &= 2 a_1,\\
b_i +b _{i-1}  &= 2a_i \text { for every }i \text { with } 2\leq i \leq n_{\mathrm{odd}}-1,\\
\text{and }
b_{n_{\mathrm{odd}}-1} &= 2 a_{n_{\mathrm{odd}}}.
\end{align*}
In particular, the coefficients $b_j$ are all even, and
$$
u = \frac{u-\sigma_*(u)}2 =  \sum_{j=1}^{n_{\mathrm{odd}}-1} \frac{b_j}2  \bigl ( [\widehat \beta_j] - \sigma_*( [\widehat \beta_j] ) \bigr). 
$$
Therefore, the elements $ [\widehat \beta_j] - \sigma_* \bigl ( [\widehat \beta_j] \bigr)$ generate $ H_1(\widehat U_1;\Z)^-$. Since these elements $ [\widehat \beta_j] - \sigma_* \bigl( [\widehat \beta_j] \bigr) = 2 [\widehat \beta_j] + [\widehat\alpha_j] + [\widehat\alpha_{j+1}]$ are linearly independent, they form a basis for $ H_1(\widehat U_1;\Z)^-$.

Note that $[\widehat \beta_j] \cdot [\widehat \beta_{j'}] =0$ if $| j-j'|>1$, and $[\widehat \beta_j] \cdot [\widehat \beta_{j+1}] = \epsilon_j = \pm1$, where the sign depend on which lift of $\beta_j$ we chose for $\widehat \beta_j$. Also, 
$$
\sigma_* \bigl( [\widehat \beta_j]\bigr) \cdot [\widehat\beta_{j'}] 
= [\widehat\beta_{j}] \cdot \sigma_* \bigl( [\widehat \beta_{j'}]\bigr) 
= - \sigma_* \bigl( [\widehat \beta_{j}]\bigr)  \cdot  \sigma_* \bigl( [\widehat \beta_{j'}]\bigr) 
=-[\widehat\beta_{j}]  \cdot [\widehat\beta_{j'}] .
$$

It follows that, in the basis of $ H_1(\widehat U_1;\Z)^-$ formed by the $ [\widehat \beta_j] - \sigma_* \bigl( [\widehat \beta_j] \bigr)$, the intersection form has matrix
$$
\begin{pmatrix}
0& 4\epsilon_1 & 0 & 0&\dots &0&0\\
-4\epsilon_1& 0 & 4\epsilon_2 &0& \dots &0&0\\
0& -4\epsilon_2 & 0 & 4\epsilon_3& \dots &0&0\\
0& 0 & -4\epsilon_3& 0 & \dots &0&0\\
\dots &\dots &\dots &\dots &\dots &\dots &\dots \\
0& 0 & 0 & 0& \dots &0&4\epsilon_{n_{\mathrm{odd}}-2}\\
0& 0 & 0& 0 & \dots &-4\epsilon_{n_{\mathrm{odd}}-2} &0
\end{pmatrix}
$$
By block diagonalizing this matrix, a final modification of the basis provides a new basis for $ H_1(\widehat U_1;\Z)^-$ in which the intersection form is block diagonal with $\frac 12 n_{\mathrm{odd}}-1$ blocks
$
\begin{pmatrix}
0&4\\-4&0
\end{pmatrix}
$
and with one block 
$
\begin{pmatrix}
0
\end{pmatrix}
$. 

There remains to show that the block 
$
\begin{pmatrix}
0
\end{pmatrix}
$
corresponds to the image of $H_1(\widehat \gamma; \Z)^-$. This could be seen by explicitly analyzing the block diagonalization process of the above matrix. However, it is easier to note that $H_1(\widehat \gamma; \Z)^- \cong \Z$ is generated by $[\widehat \gamma_1] - [\widehat \gamma_2]$, where $\widehat \gamma_1$ and $\widehat \gamma_2$ are the two components of the pre-image $\widehat\gamma$ of $\gamma$ and are oriented by the boundary orientation of $\partial \widehat U_1$. Then, $[\widehat \gamma_1] - [\widehat \gamma_2]$ is in the kernel of the intersection form of $H_1(\widehat U_1; \Z)^-$, since $\widehat \gamma_1$ and $\widehat \gamma_2$ are in the boundary of $\widehat U_1$, and generate this kernel since it is isomorphic to $\Z$ and since $[\widehat \gamma_1] - [\widehat \gamma_2]$ is indivisible in $H_1(\widehat U_1; \Z)$. 
\end{proof}

We now only need to combine the computations of Lemmas~\ref{lem:ExactSeq}, \ref{lem:ComputeU2} and  \ref{lem:ComputeU1}  to obtain a basis of $ H_1(\widehat U; \Z) ^-$ in which the intersection form is block diagonal with $h$ blocks
$
\begin{pmatrix}
0 & -2\\ 2 &0
\end{pmatrix}
$, $\frac12 n_{\mathrm{odd}}-1$ blocks 
$
\begin{pmatrix}
0 & -4\\ 4 &0
\end{pmatrix}
$,
and $n_{\mathrm{even}}$ blocks
$
\begin{pmatrix}
0
\end{pmatrix}
$. 

Applying Lemmas~\ref{lem:EdgeWeightsHomology} and \ref{lem:ThurstonHalfIntersection} to connect this to the Thurston intersection form on the edge weight space $\mathcal W(\tau;\Z)$, we conclude that $\mathcal W(\tau;\Z)$ admits a basis in which the intersection form is block diagonal with $h$ blocks
$
\begin{pmatrix}
0 & -1\\ 1 &0
\end{pmatrix}
$, $\frac12 n_{\mathrm{odd}}-1$ blocks 
$
\begin{pmatrix}
0 & -2\\ 2 &0
\end{pmatrix}
$,
and $n_{\mathrm{even}}$ blocks
$
\begin{pmatrix}
0
\end{pmatrix}
$. In addition, by the second half of Lemma~\ref{lem:ComputeU2} and using Lemma~\ref{lem:EvenSpikes}, the generators corresponding to the blocks $
\begin{pmatrix}
0
\end{pmatrix}
$ 
can be assumed to correspond to the elements of $\mathcal W(\tau;\Z)$ associated to the components of $\partial_{\mathrm{even}}U$. 

This proves Theorem~\ref{thm:StructureThurston}, under our assumption that $n_{\mathrm{odd}}>0$. 

\medskip 

We now consider the case where $n_{\mathrm{odd}}=0$, namely where $\partial _{\mathrm{odd}} U = \varnothing$, and where the train-track $\tau$ is non-orientable. This second property is equivalent to the property that the covering $\widehat U \to U$ is non-trivial. We can then realize the cohomology class of $H^1(U;\Z_2)$ classifying the covering $\widehat U \to U$ as the Poincar\'e dual of a non-separating simple closed curve $K$. 
Let $U_1 \subset U$ be a surface of genus 1 containing $K$ and bounded by a simple closed curve $\gamma$, and let $U_2$ be the closure of $U-U_1$. As before, let $\widehat U_1$, $\widehat U_2$, $\widehat\gamma$ denote the respective pre-images of $U_1$, $U_2$, $\gamma$ in $\widehat U$. 

The computation of Lemma~\ref{lem:ComputeU2} applies to this case as well, and provides a basis for $H_1(\widehat U_2; \Z)^- $   in which the intersection form  is block diagonal with $h$ blocks
$
\begin{pmatrix}
0 & -2\\ 2 &0
\end{pmatrix}
$
and $n_{\mathrm{even}}$ blocks
$
\begin{pmatrix}
0
\end{pmatrix}
$. 

The surface $\widehat U_1$ is a twice-punctured torus. A simple analysis of the covering $\widehat U_1 \to U_1$ shows that $H_1(\widehat U_1; \Z)^-\cong \Z$ is equal to the image of 
$H_1(\widehat \gamma; \Z)^-$. The intersection form of  $H_1(\widehat U_1; \Z)^-$ is then $0$. 

Again, combining these computations with the exact sequence
$$
0 \to H_1(\widehat \gamma; \Z) ^-
\to H_1(\widehat U_1; \Z) ^-\oplus H_1(\widehat U_2; \Z)^-
\to  H_1(\widehat U; \Z) ^-
\to 0.
$$
provides in this case a basis for $ H_1(\widehat U; \Z) ^-$ in which the intersection form is block diagonal with $h$ blocks
$
\begin{pmatrix}
0 & -2\\ 2 &0
\end{pmatrix}
$
and $n_{\mathrm{even}}$ blocks
$
\begin{pmatrix}
0
\end{pmatrix}
$. 
Using Lemmas~\ref{lem:EdgeWeightsHomology} and \ref{lem:ThurstonHalfIntersection}, this provides the result promised in Theorem~\ref{thm:StructureThurston} in this case as well. The fact that the generators corresponding to the blocks
$
\begin{pmatrix}
0
\end{pmatrix}
$
can be chosen to be the elements associated to the components of $\partial_{\mathrm{even}}U$ is a byproduct of the proof as in the previous case. 

\medskip

Finally, we need to consider the case where  $n_{\mathrm{odd}}=0$ and the train-track $\tau$ is orientable. Then the covering $\widehat U \to U$ is trivial, so that $H_1(\widehat U; \Z)^- \cong H_1(U;\Z)$ in such a way that the intersection form of $H_1(\widehat U; \Z)^-$ corresponds to twice the intersection form of $H_1(U; \Z)$. By Lemma~\ref{lem:ThurstonHalfIntersection}, the last case of Theorem~\ref{thm:StructureThurston} immediately follows. 
\end{proof}

\end{document}